\newtheorem{Theorem}{Theorem}   
\newtheorem{lemm}[Theorem]{Lemma}       
\newtheorem{prop}[Theorem]{Proposition} 
\newtheorem{state}[Theorem]{Statement}
\newtheorem{hypothesys}[Theorem]{Hypothesis}
\newcommand{\nqz}[1]{{\mathbb {#1}}}    
\newcommand{\goth}[1]{{\mathfrak {#1}}}   
\begin{document}
 
\title{
Artinian bimodule with quasi-Frobenius bimodule of translations }

\author{A.A.Nechaev\thanks{This article is dedicated to the memory of A.A.Nechaev}, V.N.Tsypyschev\thanks{Corresponding author: Vadim N.Tsypyschev(Tzypyschev), e-mail address: tsypyschev@yandex.ru}}

\date{}
    
\maketitle

\begin{abstract}
For bimodule  $_A M_B$ we intoduce notion of  \emph{  bimodule of translations}
 $_C M_Z$, where $C$ is a quotient ring of a \emph{Schneider} ring   of $_AM_B$  and $Z$ is its center. 

We investigate mutual relationship of two cases :

(а) bimodule  $_A M_B$  is a Morita's Artinian duality context and 

(b) bimodule  $_C M_Z$ is a Morita's Artinian duality context.
	
	Let's note that sometimes   bimodule of translations is called  \emph{ canonical bimodule } and $C$ is called \emph{multiplication } ring.

\end{abstract}

\section{Introduction }

Main results of this article previously were presented at Workshop  
\cite{NechaevZepeschThesysisQF2000}. 
Area of this investigation had  arose in 1990-s   when A.A.Nechaev рas had attempts to generalize the notion of linear recurrent sequence over commutative ring to the cases of linear recurrences over non-commutative ring, module, bimodule  
\cite{NechaevLRSQF1993,NechaevLRSQF1995,NechaevLRSQFPAM1995,MikhalevNechaevLRSQF1996,NechaevdeGru}.

Let's note immediately that the necessity for the ring, module, bimodule to be \emph{quasi-Frobenius} \cite{Azumaya} was established promptly.
Further one of the possible ways to determine linear recurrences over  non-commutative ring, module or bimodule was this one \cite{NechaevZepeschMSSU1995}.

Let  ${}_AM_B$ be an arbitrary bimodule.
\emph{ By the left translation  \/}
\cite{NechaevZepeschMSSU1995}
generated by the element  $a \in A$
is called a natural map 
$\hat a: M \to  M$ defined by equality : $\forall m \in M$ $\hat a
(m)=am$.
\emph{ Analogously  the right translation \/}
\cite{NechaevZepeschMSSU1995} generated by the element  $b \in B$
is a natural map 
$\check b$ defined by the equality : $\forall m \in M$ $\check b(m)=mb$.
It is evidently that 
$\hat a \in \mathop{\rm End}(M)=\mathop{\rm
End}(M_{\nqz{Z}})$ and 
$\check b \in \mathop{\rm
End}(M)$.
Subring 
$\hat A=\{ \hat a \; | \; a \in A \}$
of the ring  $\mathop{\rm End}(M)$ 
is called \emph{ ring of left translations.\/}
Respectively ring 
$\check B =\{ \check b
\; | \; b \in B \}$ is called 
\emph{ ring of right translations.\/}

  Let's remember that rings  $(K,+,*)$ and  $(R,\oplus,\otimes)$ are called \emph{  inversely  isomorphic or  anti-isomorphic \/} \cite{Kasch},
if ring's isomorphism takes place: $K^{op} \cong
R$ where  $K^{op}=(K,+,\tilde *)$ and operation  $\tilde *: K\times K\to K$
is defined according to the rule: $k_1\tilde *k_2=k_2*k_1,\ k_1,k_2\in K$.

It is easy to see that if  ${}_AM$  is a faithful module then rings $A$ and  $\hat A
$  are isomorphic  and if  $M_B$ is a faithful module then rings  $B$ and 
$\check B$  are inversely  isomorphic .

\begin{lemm}
\label{l1(a)}
For arbitrary bimodule  ${}_A M_B$ this equality takes place: 
$\hat A \cap \check B=\mathop{\rm \bf Z}(\hat A) \cap
\mathop{\rm \bf Z}(\check B)$. Here $\mathop{\rm\bf Z}(\hat A)$ and $\mathop{\rm\bf Z}(\check B)$ denote centers of corresponding rings. 

\end{lemm}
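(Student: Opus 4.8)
The plan is to reduce everything to one structural observation: left translations and right translations commute with one another inside $\mathop{\rm End}(M)$. Indeed, for any $a\in A$, $b\in B$ and $m\in M$ the bimodule associativity axiom gives $\hat a\bigl(\check b(m)\bigr)=a(mb)=(am)b=\check b\bigl(\hat a(m)\bigr)$, so $\hat a\check b=\check b\hat a$. Hence every element of $\check B$ commutes with every element of $\hat A$ in $\mathop{\rm End}(M)$, and symmetrically. This is the only place the bimodule structure enters.

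Granting this, the inclusion "$\supseteq$" is immediate and uses nothing: the center of a ring is contained in that ring, so $\mathop{\rm\bf Z}(\hat A)\subseteq\hat A$ and $\mathop{\rm\bf Z}(\check B)\subseteq\check B$, whence $\mathop{\rm\bf Z}(\hat A)\cap\mathop{\rm\bf Z}(\check B)\subseteq\hat A\cap\check B$. For "$\subseteq$" I would take $f\in\hat A\cap\check B$ and write $f=\check b$ for a suitable $b\in B$; by the commuting relation above, $f\hat a'=\hat a'f$ for every $\hat a'\in\hat A$, and since also $f\in\hat A$ this says precisely that $f\in\mathop{\rm\bf Z}(\hat A)$. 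Interchanging the roles of $A$ and $B$ — now writing $f=\hat a$ and using that $\hat a$ commutes with all right translations — gives $f\in\mathop{\rm\bf Z}(\check B)$ in the same way. Combining, $f\in\mathop{\rm\bf Z}(\hat A)\cap\mathop{\rm\bf Z}(\check B)$, which completes the argument.

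There is essentially no serious obstacle here; the two points deserving attention are, first, to read $\mathop{\rm\bf Z}(\hat A)$ as the center of the ring $\hat A$ — so that it automatically lies inside $\hat A$ — rather than as the full centralizer of $\hat A$ in $\mathop{\rm End}(M)$, and second, to apply the bimodule axiom $(am)b=a(mb)$ in the correct order when verifying that $\hat A$ and $\check B$ commute elementwise. Everything else is a direct set-theoretic manipulation of the two inclusions.
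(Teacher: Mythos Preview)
Your proof is correct and follows essentially the same approach as the paper: both arguments hinge on the observation that the bimodule axiom $(am)b=a(mb)$ forces every element of $\hat A$ to commute with every element of $\check B$, after which the inclusion $\hat A\cap\check B\subseteq\mathop{\rm\bf Z}(\hat A)\cap\mathop{\rm\bf Z}(\check B)$ is immediate and the reverse inclusion is trivial. Your write-up is simply more explicit about each step.
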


\begin{proof} Because of the associativity  $(am)b=a(mb)$ elements of the rings  $\hat A$ and  $\check B$ are  pair-wise commutative. Hence arbitrary element 
 $z \in \hat A \cap \check B$
lies in  $\mathop{\rm \bf Z}(\hat A)$ and in  $\mathop{\rm \bf Z}(
\check B)$ concurrently,  i.e. 
$\hat A \cap \check B \subset \mathop{\rm \bf Z}(\hat A) \cap
\mathop{\rm \bf Z}(\check B)$. 
Inverse inclusion is evident. 
\end{proof}

Commutative ring $Z=\hat A \cap \check B$ ( Lemma \ref{l1(a)}) is called  \cite{NechaevZepeschMSSU1995}
\emph{ common center of rings  $A$ and  $B$  in relation to  bimodule  $M$. \/}
Let's denote by  $\phi: \hat A \to A$
isomorphism constructed according to the rule:  $\hat a \stackrel{\phi}{
\mapsto} a$ and by 
$\psi:
\check B \to B$
inverse isomorphism of rings 
$\check B$ and  $B$  constructed by the rule: 
$\check b \stackrel{\psi}{\mapsto} b$.
Then  $\phi(Z) < \mathop{\rm \bf Z}(A)$, $\psi(Z)<\mathop{\rm \bf
Z}(B)$.

Besides that to  the set  $M$ may be correctly assigned structure of  
$(Z,Z)$-bimodule  in such way that these identities hold: 
$z \cdot m=\phi(z)m$, $m \cdot z=m\psi(z)$,
$zm=mz$.

 $(Z,Z)$-bimodule 
$\hat A \mathop{\otimes}\limits_Z \check B$ has an associative  multiplication operator 
\cite[Proposition 9.2(a)]{Pierce} defined by the rule:
$(\hat a_1 \otimes \check b_1 )
(\hat a_2 \otimes \check b_2 )=
\hat a_1 \hat a_2 \otimes \check b_1 \check b_2$.
Herewith if  $A$ and  $B$ are rings with identity then 
$1_{\hat A \otimes \check B}=1_{\hat A} \otimes 1_{\check B}$.
Now  $\hat 
A \mathop{\otimes}\limits_Z \check B$ is a ring and $M$  is a 
\cite[Proposition 10.1(a)]{Pierce}  left  $\hat A
\mathop{\otimes}\limits_Z \check B$-module with multiplication according to the rule:  $(\hat a \otimes \check b)m=\phi(\hat a)m
\psi(\check b)$.
If modules  ${}_AM$ и $M_B$  are unitary  then 
$1_{\hat A \otimes \check B}=\epsilon$ is an identity mapping of   $M$.

Further we will need this generalization of 
 \cite[$\S10.1$,
Proposition ]{Pierce}:

\begin{prop}
 Let  $A, B$ are  $Z$-algebras. Any 
$(A,B)$-bimodule  $M$
is a left  $A\mathop{\otimes}\limits_ZB^{op}$-module with scalar multiplication defined according to the rule: 
$(x\otimes y)u=(xu)y=x(uy)
$
for  $x\in A, y\in B$, $u\in M$.

Conversely, any left  $A\otimes B^{op}$-module  is an $(A,B)$-bimodule with operations: 
$
xu=(x\otimes1_B)u,\ \ \ uy=(1_A\otimes y)u.
$
If  $M$ and  $N$ are  $(A,B)$-bimodules  then 
$
\mathop{\rm Hom}_{A-B}(M,N)=\mathop{\rm Hom}_{A\otimes B^{op}}(
{}_{A\otimes B^{op}}M, {}_{A\otimes B^{op}}N).
$
\end{prop}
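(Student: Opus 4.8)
The plan is to verify the three asserted facts in turn, each by a direct check that the proposed operations are well-defined and satisfy the module/bimodule axioms, leaning on the associativity identity $(xu)y=x(uy)$ that already holds in any $(A,B)$-bimodule.

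First I would treat the forward direction. Given an $(A,B)$-bimodule $M$, I want a left $A\otimes_Z B^{op}$-module structure via $(x\otimes y)u=(xu)y$. The only subtle point is well-definedness over the tensor product: the map $A\times B^{op}\to \mathop{\rm End}_Z(M)$ sending $(x,y)$ to $u\mapsto (xu)y$ must be $Z$-balanced, i.e. $(xz,y)$ and $(x,zy)$ give the same endomorphism for $z\in Z$; this follows because $Z$ acts centrally on $M$ through $A$ and through $B$ compatibly (the $(Z,Z)$-bimodule structure noted just before the proposition, with $zm=mz$), so $((xz)u)y=(x(zu))y=(x(uz))y=(xu)(zy)$ after moving $z$ across. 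By the universal property this induces a ring homomorphism $A\otimes_Z B^{op}\to \mathop{\rm End}_Z(M)$; one checks it respects multiplication using the given product rule $(\hat a_1\otimes\check b_1)(\hat a_2\otimes\check b_2)=\hat a_1\hat a_2\otimes\check b_1\check b_2$ on the tensor product (note the order reversal in $B^{op}$ matches the order of right multiplications), and respects the identity when units exist. The equality $(xu)y=x(uy)$ shows the two natural expressions for $(x\otimes y)u$ agree.

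Second, for the converse, starting from a left $A\otimes_Z B^{op}$-module $M$ I define $xu=(x\otimes 1_B)u$ and $uy=(1_A\otimes y)u$. I would check: left $A$-linearity in $u$ and compatibility $(x_1x_2)u=x_1(x_2u)$ come from $(x_1\otimes 1)(x_2\otimes 1)=x_1x_2\otimes 1$; right $B$-module axioms come from $(1\otimes y_1)(1\otimes y_2)=1\otimes y_2y_1$ (again the $op$ makes the orders match); and the crucial bimodule compatibility $(xu)y=x(uy)$ holds because $(x\otimes 1)$ and $(1\otimes y)$ commute in $A\otimes_Z B^{op}$, their product being $x\otimes y$ both ways. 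One also observes the two constructions are mutually inverse, so the correspondence is an isomorphism of categories.

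Third, for the Hom statement, given $(A,B)$-bimodules $M,N$ I would show a $Z$-linear map $f\colon M\to N$ is $(A,B)$-bilinear if and only if it is $A\otimes_Z B^{op}$-linear. This is immediate once the first two parts are in hand: $A\otimes_Z B^{op}$-linearity means $f((x\otimes y)u)=(x\otimes y)f(u)$ for all simple tensors $x\otimes y$ (these generate the ring additively), which by definition says $f((xu)y)=(xf(u))y$; specializing $y=1_B$ gives left $A$-linearity, specializing $x=1_A$ gives right $B$-linearity, and conversely those two together recover the general case. Thus the two Hom-sets coincide as subsets of $\mathop{\rm Hom}_Z(M,N)$.

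I do not expect a serious obstacle here; the statement is a routine ``change of rings'' identification. The one place to be careful is the $Z$-balancing in the forward direction — making sure the central action of $Z$ really passes through both $A$ and $B$ consistently so that $A\otimes_Z B^{op}$ (rather than $A\otimes_{\nqz Z}B^{op}$ over the prime ring, say) is the right object — and keeping the order-reversal in the $B^{op}$ factor aligned with the order of right multiplications throughout. Both are bookkeeping rather than conceptual difficulties, and both are already licensed by the discussion of the $(Z,Z)$-bimodule structure and the tensor-product ring multiplication recalled immediately before the proposition.
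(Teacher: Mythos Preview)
Your argument is correct and complete; the well-definedness check via $Z$-balancedness, the ring-homomorphism verification using the $B^{op}$ multiplication, the inverse construction, and the Hom identification by specializing to $x=1_A$ and $y=1_B$ are exactly the standard steps. The paper itself gives no proof of this proposition: it is stated without argument as a generalization of \cite[\S 10.1, Proposition]{Pierce}, so there is nothing in the paper to compare your approach against.
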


Ring $C$ is called \emph{ ring of translations  of bimodule ${}_A M_B$\/}
\cite{NechaevZepeschMSSU1995} if it is generated in the ring 
 $\mathop{\rm End}(M)$ by the  union of rings  $\hat A$ and 
$\check B$.

It takes place 
\begin{state}
\label{st2}
Mapping $\lambda: \hat A \mathop{\otimes}\limits_Z
\check B \to C$ satisfying  to the rule: 
$\lambda\left(\sum\limits_{(i)} \hat a_i \otimes \check b_i \right)
=\sum\limits_{(i)} \hat a_i \check b_i$
is an epimorphism of rings and 
 $\ker \lambda =\{ \delta \in
\hat A \mathop{\otimes}\limits_Z \check B \  | \  \delta M=\theta \}$
where  $\theta$ is an identity element of  $M$.
\end{state}

\begin{proof}  The first statement follows from element-wise commutativity of rings  $\hat A$ and  $\check B$. The second statement follows from the fact that  ${}_C M$  is a faithful module.  
\end{proof} 

Previously \cite{NechaevZepeschMSSU1995}
bimodule  ${}_C M_Z$
was called 
\emph{ canonical bimodule of bimodule  ${}_A M_B$. \/}
            
Further we instead or denomination   \emph{ canonical bimodule \/}
will use denomination 
\emph{  bimodule of translations \/}.

 Now   \emph{linear recurrent sequence } over bimodule  $_AM_B$ may be defined in such way   \cite{NechaevZepeschMSSU1995}: it is a mapping   $\mu: {\mathbb N}_0\to M$ for which there exists a unitary polynomial  $c(x)=x^m-
\sum_{j=0}^{m-1}c_jx^j \in C[x]$ with property:  $\forall i\in{\mathbb N}_0 \ \mu(i+m)=
\sum_{j=0}^{m-1} c_j \left(\mu(i+j)\right)$. 

Herewith to generalize classical properties  of  linear recurrences onto newly arisen case it is necessary for bimodule  $_CM_Z$  to be \emph{quasi-Frobenius.}

Let's remember that according to definition 
\cite{Azumaya} left-faithful and right-faithful bimodule $M$ over rings $A$ and  $B$ with identities is called \emph{quasi-Frobenius} if for every maximal left  ideal 
 $I$ of ring  $A
$
\emph{right annihilator   \/}
$r_M(I)=\{ m \in M \; | \; Im=\theta \}$
is either irreducible  $B$-module either equal to neutral element of $M$ 
and for every maximal right ideal  $J$ of ring  $B$
\emph{left annihilator   \/}
$l_M(J)=\{ m \in M \; | \;
mJ=\theta \}$
is either irreducible  $A$-module either equal to $\theta$. 
Here and after $\theta$ is a neutral element of the group  $(M,+)
$.

However it is a problem  that there were not known any facts about  relations between quasifrobeniusness of bimodule  $_CM_Z$  and qusifrobeniusness of bimodule 
$_AM_B$.
Initially was formulated  
\begin{hypothesys}[\cite{NechaevZepeschMSSU1995}]
If bimodule  $_AM_B$ is a Morita's Artinian duality context then  bimodule of translations  $_CM_Z$ is also a Morita's Artinian duality context. 
\end{hypothesys}

However during multiyear process of attempts to prove this Hypothesis it become clear that  it's in general case wrong  (Theorem \ref{TH4+1}) and moreover under additional conditions inverse implication holds  (Theorem  \ref{TH4}).

Resulting in these facts approach to definition of linear recurrences over bimodules suggested by V.L.Kurakin was adopted  \cite{KuMiN2000}. Theoretically this approach was based on  results of
\cite{KurakinNechaevBiQF2001}.

Despite this results of this article are not negligible  and were continued: notion of \emph{matrix linear recurrences} is based on the Theorem  \ref{TH4}  
\cite{KuMiNeZ,ZepeschMLCG} and the notion of \emph{skew linear recurrences} is based on Theorem  \ref{TH4+1}  \cite{NechaevSkew}.

Let's now get to outline and prove main results of this article. In order of independence and completeness we will remember necessary definitions  
 \cite{Kasch}.
Module  ${}_A M$ is called  \emph{ injective  \/} if 
for every monomorphism  $\alpha: {}_A K \to {}_A L$  and every homomorphism  $\phi: {}_A K \to {}_A M$ there exists homomorphism  
$\psi: {}_A L \to {}_A M$ with property $\phi=\psi\alpha$.
It is equivalent  \cite[Theorem  5.3.1(a)]{Kasch} to the fact that for every monomorphism  $\xi: {}_A M \to {}_A N$
module 
$\xi(M)$
allocated in  $N$ as a direct summand.  According to \cite[Theorem 
5.6.4]{Kasch} for every  $A$-module  $L$ there exists unique up to isomorphism \emph{ injective hull \/}
$\mathop{\rm \bf I}(L)
$ i.e. minimal according to order of inclusion injective module containing $L$.
Module  ${}_AU>{}_AL$ is called  \emph{ essential extension   \/} of module  ${}_AL$ if for every submodule  $V$  of module  ${}_AU$
from condition  $L\cap V=\theta$  follows that  $V=\theta$.
It is known  \cite[Теорема 5.6.6]{Kasch} that injective hull  $\mathop{\rm\bf I}(L)$ of module  $L$
is a maximal essential extension  of  $L$.

Also we have to remember the notion of \emph{ Brauer group of the field $F$} 
\cite[$\S 12.5$]{Pierce}.

Let  $F$  be a field.  Let's denote by  $\goth{B}(F)$ class of all finite-dimensional simple $F$-algebras  $A$ with property  $\mathop{\rm \bf Z}(A)=F$ i.e. class of all finite-dimensional central simple $F$-algebras. 

\emph{  If algebras  $A$ and  $B$ contains in  $\goth{B}(F)$ then 
\cite[$\S12.4$, Proposition  b(i)]{Pierce} $F$-algebra 
$A\mathop{\otimes}\limits_FB$ contains in  $\goth{B}(F)$.}

\emph{Besides that 
\cite[$\S12.5$, Lemma ]{Pierce} following conditions are equivalent: }

\emph{(i) basic algebras of  $A$ и $B$ are isomorphic;}

\emph{(ii) there exists an algebra with division  (body) $D\in\goth{B}(F)$ and naturals  $m$ and  $n$ such that  $A\cong D_{n,n}$ and  $B\cong D_{m,m}$;}

\emph{(iii) there exists naturals  $r$ and  $s$ such that  $A\otimes F_{r,r}
\cong B\otimes F_{s,s}$. \/}

Algebras  $A, B\in\goth{B}(F)$, underlying this conditions are called 
 \emph{ equivalent. \/}
Equivalence class of algebra   $A$ is denoted as 
 $[A]$.

\emph{ The set  $\mathop{\rm\bf B}(F)=\{[A]\ |\ A\in\goth{B}(F)\}$ is 
\cite[$\S12.5$, Proposition  a]{Pierce} an Abel group with operator 
$[A][B]=[A\otimes B]$, neutral element  $[F]$ and the operation of taking inverse element $[A]^{-1}=[A^{op}]$.\/} The group  $\mathop{\rm\bf B}(F)$ is called 
 {\em Brauer group \/} of the field  $F$. \emph{ Every class in group 
$\mathop{\rm\bf B}(F)$ is represented by  \cite[$\S12.5$, Proposition 
b(ii)]{Pierce}  algebra with division which is unique up to isomorphism .\/}
If Brauer group of field $F$ is trivial then every central simple $F$-algebra is a full matrix algebra over $F$.  
\emph{ If the field  $F$ is algebraically closed then its Brauer group is trivial \/} \cite[$\S12.5$, Corollary]{Pierce}. Besides that Brauer group of the finite field is trivial too.

By \emph{ socle  \/}
$\goth{S}({}_A M)$ of the left  $A$-module  $M$ is called 
\cite[Chapter  IV]{Jacobson}
the sum of all irreducible submodules of the  $A$-module  $M$.

The main result of this work consists in following: 
\begin{Theorem}
\label{TH4}

Let bimodule        
 ${}_A M_B$ be a faithful as a left $A$-module and as a right 
$B$-module simultaneously,  bimodule of translations   ${}_C M_Z$
of bimodule  ${}_A M_B$ is quasi-Frobenius, $Z$ is a local Artinian ring, 
$Q=\mathop{\rm\bf I}(\bar Z_Z)$ is an injective hull of the unique irreducible  $Z$-module  $\bar Z$.

Then: 
 
I. (a) Ring's isomorphism takes place:  $C \cong Z_{n,n}$. 

(b) Bimodule's isomorphism takes place:  ${}_CM_Z \cong {}_{Z_{n,n}}Q^{(n)}_Z$.

(c) Rings  $A$, $B$ are primary Artinian left-side and right-side simultaneously rings with identity  and factor-rings  $\bar A=A/J(A)$,
$\bar B=B/J(B)$
of rings  $A$ and  $B$ respectively when activated modulo  Jacobson's radical are equivalent elements of the set  $\goth{B}(\bar Z)$.
         
(d) Bimodule  ${}_{\bar A}\goth{S}({}_CM)_{\bar B}$ is quasi-Frobenius. 

II. Bimodule  ${}_AM_B$ is quasi-Frobenius if and only if the left socle  $\goth{S}({}_AM)$  and the right socle  $\goth{S}(M_B)$
of bimodule  ${}_AM_B$ are equal to $\goth{S}({}_CM)$.
                             
III. Under additional condition of the form: Brauer group  $\mathop{\rm \bf B}(\bar Z)$ of the field  $\bar Z$ is trivial 
bimodule   ${}_AM_B$ is quasi-Frobenius and moreover for some naturals  $r,s \in \nqz{N}$ such that  $n=r\cdot s$ isomorphisms of rings and bimodules respectively takes place: 
                                         
(a) $A \cong Z_{r,r}$, $B \cong Z_{s,s}$,

(b) ${}_A M_B \cong {}_{Z_{r,r}} {Q_{r,s}}_{Z_{s,s}}$.
\end{Theorem}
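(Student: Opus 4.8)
The plan is to pin down the structure of the bimodule ${}_CM_Z$ first, then descend to $A$ and $B$; parts~II and~III then rest on part~I.

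\textbf{Step 1: the bimodule ${}_CM_Z$.} Since $Z$ is local Artinian, $\bar Z=Z/J(Z)$ is a field, $J(Z)$ is nilpotent, and $Q=\mathop{\rm\bf I}(\bar Z_Z)$ is the unique minimal faithful injective $Z$-module, with $\mathop{\rm End}(Q_Z)\cong Z$. Applying the quasi-Frobenius hypothesis at the unique maximal ideal $J(Z)$ of $Z$, the annihilator $l_M(J(Z))=\goth{S}(M_Z)$ is a nonzero irreducible left $C$-module, and it coincides with $\goth{S}({}_CM)$ (any nonzero $Z$-submodule meets $\goth{S}(M_Z)$). Using this together with the annihilator conditions at the maximal left ideals of $C$ and the faithfulness of ${}_CM$, one shows $M_Z$ is finitely cogenerated and injective, hence $M_Z\cong Q^{(n)}$ for some \emph{finite} $n$, and that the faithful representation $C\hookrightarrow\mathop{\rm End}(M_Z)\cong Z_{n,n}$ is onto. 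This gives I(a)--(b); it also yields $Z\cong\mathop{\rm\bf Z}(C)\cong\mathop{\rm End}({}_CM)$ and $\mathop{\rm\bf Z}(\hat A)=\mathop{\rm\bf Z}(\check B)=Z$.

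\textbf{Step 2: parts I(c)--(d).} As $C=Z_{n,n}$ is module-finite over the Noetherian ring $Z$ and contains the elementwise-commuting unital subrings $\hat A\cong A$ and $\check B\cong B^{op}$, whose union generates $C$ and which meet in $Z$, the rings $A,B$ are Artinian on both sides. Pass to $\bar C:=C/J(C)\cong\bar Z_{n,n}$, which is simple; if $\bar A^{*},\bar B^{*}$ are the images of $\hat A,\check B$, then $J(\bar A^{*})\bar B^{*}$ is a nilpotent two-sided ideal of $\bar C$, hence zero, so $\bar A^{*}$ is semisimple, and its centre — commuting with both $\bar A^{*}$ and $\bar B^{*}$ — lies in $\mathop{\rm\bf Z}(\bar C)=\bar Z$, so $\bar A^{*}$ is central simple over $\bar Z$. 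A nilpotency argument gives $J(\hat A)=\hat A\cap J(C)$, whence $\bar A^{*}\cong\bar A=A/J(A)$ (and likewise for $B$), so $A,B$ are primary. Finally $\bar A\otimes_{\bar Z}\bar B^{op}$ is central simple over $\bar Z$ and surjects onto $\bar C\cong\bar Z_{n,n}$, hence isomorphically, so $[\bar A][\bar B]^{-1}=[\bar Z_{n,n}]=[\bar Z]$ in $\mathop{\rm\bf B}(\bar Z)$: the classes are equal. For I(d), identify $\goth{S}({}_CM)$ with the simple module of $\bar Z_{n,n}\cong\bar A\otimes_{\bar Z}\bar B^{op}$ and verify the quasi-Frobenius annihilator conditions for ${}_{\bar A}\goth{S}({}_CM)_{\bar B}$ by a dimension count over $\bar Z$.

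\textbf{Step 3: part II.} The socle equalities hold unconditionally: $Z$ is a central subring of $\hat A$ with $J(Z)$ nilpotent, so $J(Z)\subseteq J(\hat A)=\hat A\cap J(C)$, whence $J(A)m=\theta$ implies $J(Z)m=\theta$, i.e. $\goth{S}({}_AM)\subseteq\goth{S}(M_Z)=\goth{S}({}_CM)$, while $\goth{S}({}_CM)$ viewed over the simple ring $\bar A$ is semisimple, giving the reverse inclusion; similarly for $\goth{S}(M_B)$. The content of II is thus that ${}_AM_B$ is quasi-Frobenius: for a maximal left ideal $I\supseteq J(A)$ of $A$ one has $r_M(I)\subseteq r_M(J(A))=\goth{S}({}_AM)=\goth{S}({}_CM)$, so $r_M(I)$ is the annihilator of $I/J(A)$ inside the bimodule of I(d), hence irreducible over $\bar B$ — thus over $B$ — or $\theta$; symmetrically for maximal right ideals of $B$. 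With the assumed left/right faithfulness, ${}_AM_B$ is quasi-Frobenius.

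\textbf{Step 4: part III, and the main obstacle.} If $\mathop{\rm\bf B}(\bar Z)$ is trivial then $\bar A\cong\bar Z_{r,r}$ and $\bar B\cong\bar Z_{s,s}$, and $\bar A\otimes_{\bar Z}\bar B^{op}\cong\bar Z_{n,n}$ forces $n=rs$. Lift a complete system of matrix units of $\bar A$ through the nilpotent ideal $\hat A\cap J(C)$ to matrix units $\{e_{ij}\}\subseteq\hat A$, so $\hat A\cong(e_{11}\hat Ae_{11})_{r,r}$, and similarly $\check B\cong(f_{11}\check Bf_{11})_{s,s}$ with matrix units $\{f_{kl}\}\subseteq\check B$. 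Because $\hat A$ and $\check B$ commute and generate $C$, the products $\{e_{ij}f_{kl}\}$ form a complete system of $n\times n$ matrix units in $C=Z_{n,n}$, so the corner of $C$ at $g=e_{11}f_{11}$ is $\cong Z$; a short computation (using $1=\sum e_{ii}=\sum f_{kk}$ and that $e_{11}\hat Ae_{11}$ commutes with $f_{11}$) shows $e_{11}\hat Ae_{11}$ embeds in this corner with image containing the embedded copy of $Z$, hence $e_{11}\hat Ae_{11}\cong Z$ and $A\cong\hat A\cong Z_{r,r}$; symmetrically $B\cong Z_{s,s}$. This is III(a); then the $(Z_{r,r},Z_{s,s})$-bimodule $M$ must be $Q_{r,s}$, the unique faithful such bimodule whose bimodule of translations is ${}_{Z_{n,n}}Q^{(n)}_Z\cong{}_CM_Z$, giving III(b). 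The main obstacle is \textbf{Step~1}: extracting from the bare annihilator conditions defining a quasi-Frobenius bimodule, together with ``$Z$ local Artinian'', the rigidity that $M_Z$ is injective and a \emph{finite} direct sum of copies of $Q$ and that $C$ is the \emph{whole} matrix ring $Z_{n,n}$ — this is where the hypothesis is genuinely consumed; the secondary delicate point is the matrix-unit lifting and corner computation of Step~4, for which it is essential that $\hat A$ and $\check B$ together generate all of $C$.
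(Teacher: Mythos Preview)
Your strategy matches the paper's: pin down $C\cong Z_{n,n}$ and ${}_CM_Z\cong Q^{(n)}$, descend to $\hat A,\check B$ via $J(\hat A)=\hat A\cap J(C)$, then handle Parts~II--III. Two differences are worth noting.

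For your ``main obstacle'' in Step~1 the paper does not attempt a direct derivation: it simply invokes Azumaya's characterization (Theorem~\ref{th1}(5) in the paper). Once ${}_CM_Z$ is an Artinian duality context, that theorem gives at once that $M_Z$ is injective, finitely generated, and $C=\mathop{\rm End}(M_Z)$; then $M_Z\cong Q^{(n)}$ over the local Artinian ring $Z$, and $C\cong\mathop{\rm End}(Q^{(n)}_Z)\cong Z_{n,n}$ using $\mathop{\rm End}(Q_Z)\cong Z$. Your obstacle disappears if you cite this.

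More interestingly, your Step~3 is \emph{sharper} than the paper's treatment of Part~II. The paper proves the biconditional by two separate arguments, using in the converse direction that a quasi-Frobenius sub-bimodule of a quasi-Frobenius bimodule must be the whole thing. You observe instead that the chain $J(Z)\subseteq J(\hat A)\subseteq J(C)$ forces
\[
\goth{S}({}_CM)\subseteq\goth{S}({}_AM)\subseteq\goth{S}(M_Z)=\goth{S}({}_CM)
\]
\emph{unconditionally}; combined with I(d) and Statement~\ref{st1}, this shows ${}_AM_B$ is always quasi-Frobenius under the theorem's hypotheses. The paper does not draw this conclusion --- it only obtains quasi-Frobeniusness of ${}_AM_B$ in Part~III under the trivial-Brauer-group assumption --- so your argument shows that the Brauer hypothesis is needed only for the explicit matrix description III(a)--(b), not for quasi-Frobeniusness itself.

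Your matrix-unit lifting in Step~4 (showing $e_{11}\hat Ae_{11}\hookrightarrow gCg\cong Z$ via commutativity of $\hat A$ and $\check B$) is also more transparent than the paper's route, which asserts rather abruptly that $A$ contains a copy of $Z_{r,r}$ and then argues by a generation comparison inside $C$.
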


Here we denote by 
${}_{Z_{r,r}} {Q_{r,s}}_{Z_{s,s}}$  the set of orthogonal matrices of dimensions
$r\times s$  filled by elements of the set  $Q$ and usual matrix multiplication upon elements of 
$Z_{r,r}$ and  $Z_{s,s}$ respectively. 
It is evident that 
${}_{Z_{r,r}} {Q_{r,s}}_{Z_{s,s}}$
is a bimodule.

Conversion of the point  III of Theorem  \ref{TH4} is wrong even in the finite case. 
Appropriate example may be constructed in class of so called 
$GEO$-rings  \cite{NechaevMS}.
By definition the ring  $S$ is called \emph{ Galois-Eisenstein-Ore ring \/} (or \emph{ $GEO$-ring \/}) if it is finite completely primary ring of principal ideals i.e. the ring  $S$
contains unique (one-side) maximal ideal  $\goth{p}(S)$ and every one-side ideal of ring  $S$ is principal. 
\emph{Commutative\/} $GEO$-ring is called \emph{Galois-Eisenstein ring or $GE$-ring} \cite{NechaevMS}.
                                      
In arbitrary  $GEO$-ring  $S$ of characteristic $p^d$ contains 
\emph{ Galois subring \/} $R=GR(p^d,r)$ \cite{McDonald, Radghavendran}, $q=p^r$.
All such rings are conjugated in  $S$. Ring  $R$ is called \emph{coefficients ring \/} of  $S$.

\begin{Theorem}
\label{TH4+1}

(a) For arbitrary  $GEO$-ring  $S$ bimodule ${}_SS_S$  is a quasi-Frobenius. 
                   
(b)
Let  $S$ be a  $GEO$-ring with a coefficients ring  $R=GR(p^d,r)$ and in addition  $d>1$ and
$(r,p)=1$.
Then  bimodule of translations 
 ${}_CS_Z$ of quasi-Frobenius 
$S$-bimodule 
$S$ is a quasi-Frobenius bimodule if and only if 
$S$ is a  $GE$-ring. 

\end{Theorem}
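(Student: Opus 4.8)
The plan is to treat the two parts separately, part (a) being a direct verification and part (b) being the substantive claim that rests on the machinery of Theorem \ref{TH4}. For part (a), I would recall that a $GEO$-ring $S$ is a finite chain ring: it has a unique maximal (two-sided) ideal $\goth{p}=\goth{p}(S)$, and every one-sided ideal is a power $\goth{p}^k$, so the lattice of left ideals and the lattice of right ideals both coincide with the finite chain $S \supset \goth{p} \supset \goth{p}^2 \supset \dots \supset \goth{p}^{\,m}=\theta$. To check that $_SS_S$ is quasi-Frobenius in the sense of \cite{Azumaya} recalled above, I must show that for the unique maximal left ideal $\goth{p}$ the right annihilator $r_S(\goth{p})$ is an irreducible right $S$-module, and dually. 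Since $S$ is a chain ring, $r_S(\goth{p})=\goth{p}^{\,m-1}$ (the socle), which is a simple right module because $\goth{p}^{\,m-1}/\goth{p}^{\,m}$ is the unique minimal ideal; the left-sided statement is symmetric. Faithfulness on both sides is immediate since $S$ has an identity. This finishes (a).

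For part (b), the direction ``$S$ a $GE$-ring $\Rightarrow$ $_CS_Z$ quasi-Frobenius'' is easy: if $S$ is commutative then the left and right translations coincide, so $\hat S=\check S=Z=C$, and the bimodule of translations $_CS_Z$ is just $_SS_S$, which is quasi-Frobenius by (a). The real content is the converse: assuming $_CS_Z$ is quasi-Frobenius (with $d>1$, $(r,p)=1$), I must force $S$ to be commutative. Here I would invoke Theorem \ref{TH4}: the common center $Z$ of a finite ring is a finite commutative ring, and being finite it is a product of local rings; after localizing I may assume $Z$ is local Artinian, and $_CS_Z$ quasi-Frobenius then puts us exactly in the hypotheses of Theorem \ref{TH4}. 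Its conclusion I.(a) gives $C\cong Z_{n,n}$ and I.(c) says $\bar S=S/J(S)$ is a central simple algebra over the field $\bar Z=Z/J(Z)$; but $\bar S$ is a finite field extension of $\bar Z$ (as $S$ is completely primary), hence commutative, hence $\bar S=\bar Z$, which forces $n=1$ and $C=Z$. The crux is then to descend from ``$C=Z$ is commutative'' to ``$S$ itself is commutative,'' and this is where the arithmetic hypotheses $d>1$ and $(r,p)=1$ must be used.

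The main obstacle — and the step I would spend the most effort on — is precisely this descent. The ring $C$ of translations is generated by $\hat S$ and $\check S$ inside $\mathop{\rm End}(S)$; $C=Z$ means every left translation equals some right translation, i.e. for each $a\in S$ there is $a'\in S$ with $ax=xa'$ for all $x$, so conjugation on $S$ is realized by an (anti-)automorphism fixing the center. Using the structure theory of $GEO$-rings \cite{NechaevMS,McDonald}: $S$ is generated over its coefficient ring $R=GR(p^d,r)$ by a uniformizer $\pi$ with $\pi^e$ tied to $p$ and with a twisting automorphism $\sigma\in\mathop{\rm Aut}(R/\nqz{Z}_{p^d})\cong\mathop{\rm Gal}(\overline R/\nqz{F}_p)$ of order dividing $r$; commutativity of $S$ is equivalent to $\sigma=\mathrm{id}$. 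I would compute $\goth{p}(S)\cap Z$ and the action of conjugation on the residue field to show that if $\sigma\neq\mathrm{id}$ then $Z$ is strictly smaller than what quasi-Frobeniusness of $_CS_Z$ via Theorem \ref{TH4} demands — concretely, that the socle conditions in part II of Theorem \ref{TH4} fail because the one-sided socles of $_SS_S$ do not match $\goth{S}(_CS)$ when the twist is nontrivial, the hypothesis $d>1$ being exactly what makes the radical layers visible and $(r,p)=1$ what makes $\sigma$ a well-behaved semisimple automorphism so that its fixed ring is detectable. Assembling these observations yields $\sigma=\mathrm{id}$, i.e. $S$ is a $GE$-ring, completing the proof.
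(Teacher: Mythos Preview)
Part (a) and the easy direction of (b) are fine and match the paper. For the hard direction of (b), your route through Theorem~\ref{TH4}~I(c) is genuinely different from the paper's and in principle cleaner. The paper never invokes I(c) here; instead it carries out an explicit computation of the ring ${\cal R}$ of translations of the $R$-bimodule $S$, produces orthogonal idempotents $\epsilon_0,\dots,\epsilon_{t-1}\in{\cal R}$ (this is precisely where $(r,p)=1$, hence $(t,p)=1$, is used --- to invert a circulant-type matrix over $\bar R$), and concludes that $\bar C$ decomposes as a direct sum of $t$ copies of the field $\bar R$. Since Theorem~\ref{TH4}~I(a) forces $\bar C\cong\bar Z_{n,n}$ to be simple, this gives $t=1$. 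The hypothesis $d>1$ enters through the structural isomorphism $S\cong R[x,\sigma]/I$ from \cite{NechaevMS}.

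However, you misidentify where the difficulty lies. First, no localization is needed: $Z\cong\mathop{\rm\bf Z}(S)$ is automatically local, being a subring of the local ring $S$ whose non-units are exactly $Z\cap\goth{p}(S)$. Second, and more importantly, once you have reached $C=Z$ the ``descent'' to commutativity of $S$ is a one-line triviality, not the crux. You yourself write that $C=Z$ gives, for each $a\in S$, some $a'\in S$ with $ax=xa'$ for all $x$; now simply set $x=1$ to obtain $a=a'$, hence $ax=xa$ for all $a,x$. Your final paragraph --- invoking socle comparisons from part~II of Theorem~\ref{TH4} and the $\sigma$-twisted structure of $GEO$-rings to force $\sigma=\mathrm{id}$ --- is therefore an unnecessary detour. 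A curious side effect is that your argument, once the descent is done correctly, appears not to use $d>1$ or $(r,p)=1$ at all, whereas the paper's computational proof genuinely relies on both hypotheses.
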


\section{Preliminaries }
      
To prove Theorems above it is necessary to remember a lot of definitions and facts connected firstly with different characterizations of quasi-Frobenius bimodules. 
Most deeply and fully quasi-Frobenius bimodules are investigated in the case when rings  $A$
and  $B$ are \emph{Artinian   \/} left-side and right-side respectively, i.e. it satisfy descending chain conditions of left (respectively, right) ideals.  
Such quasi-Frobenius bimodules in the monography  \cite{Faith} are called 
\emph{ (Morita's) Artinian duality context \/}.
In this case  \cite[Theorem  6(iv),(v)]{Azumaya}
modules  ${}_A M$ and  $M_B$ are finitely generated. 

Let's remember other well-known results about quasi-Frobenius bimodules. 

Let  $A$ be a left Artinian ring with unit  $e$,
$J(A)$ be a Jacobson radical of ring $A$.
Then  $J(A)$ (\cite[Corollary  III.1.1]{Jacobson}) is a maximal nil-potent two-sided ideal of ring  $A$ containing every one-side nil-ideal of ring $A$. 
It is known \cite[Chapter  III]{Jacobson} that  $\bar A=A/J(A)$
is a direct sum of pair-wise orthogonal simple rings  
$\bar A_1,\ldots,\bar A_t$ with units  $\bar e^{(1)},\ldots, \bar
e^{(t)}$ respectively. 
Idempotent  $\bar e_l$, $l \in
\overline{1,t}$ of ring  $\bar A_l$
is called 
\cite{Jacobson} \emph{ primitive \/}
if left ideal  $\bar A \bar e_l$ of ring 
$\bar A$ is irreducible as a left  $\bar A$-module. Then 
$\Delta_l=\mathop{\rm End}\left({}_{\bar   A}    \bar    A\bar
e_l\right)$ is a body isomorphic to the ring 
$\bar e_l \bar A \bar e_l=\bar e_l A_l \bar e_l$ and ring 
$\bar A_l$ is a full matrix algebra of finite degree 
$k_l$ over body $\Delta_l$.
Besides that modules  $\bar A_l \bar e_l$,$l=\overline{1,
t}$ are all, up to isomorphism, irreducible 
$A$-modules. 

Analogously idempotent  $e_l$, $l \in
\overline{1,t}$, of ring  $A$
is called 
\cite{Jacobson} {\em primitive \/}
if left ideal  $\bar A \bar e_l$ of ring 
$\bar A$ is irreducible left $\bar A$-module.

Idempotents  $\bar e_j, \bar e_l \in \bar A$ are called isomorphic if isomorphic left 
 $\bar A$-modules $\bar A\bar e_j$ and  $\bar A
\bar e_l$.
Analogously idempotents  $e_j,  e_l \in  A$ are called isomorphic if isomorphic left  $A$-modules  $Ae_j$ and $Ae_l$.

Let 
$\bar e_l$, $l=\overline{1,t}$, are all non-isomorphic idempotents of ring  $\bar A$.
                          
For every 
$l=\overline{1,t}$ there exists idempotent  $e_l
\in \bar e_l$, such that elements  $e_l$, $l=\overline{1,t}$,
are all non-isomorphic primitive idempotents of ring $A$, and there takes place 
\cite[Proposition III.8.5]{Jacobson}
the expansion unit  $e$ of ring  $A$ into the sum of primitive orthogonal idempotents 
\begin{equation}
\label{eq0}
e=\sum\limits_{l=1}^{t}  \sum_{i=1}^{k_l}
e_{l,i},
\end{equation}
where every idempotent   $e_{l,i}$ is isomorphic to idempotent $e_l$
(i.e. by definition   ${}_A Ae_{l,i} \cong
{}_A Ae_{l}$).

Decomposition  (\ref{eq0}) generates decomposition of the ring  $A$ into the direct sum of undecomposable left ideals: 
\begin{equation}
\label{eq0.0}
A=\sum\limits_{l=1}^{t} \oplus  \sum_{i=1}^{k_l}
\oplus Ae_{l,i}
\end{equation}
                                                      
Let's denote 
$e^{(l)}=\sum\limits_{i=1}^{k_l} e_{l,i}
$, $l=\overline{1,t}$.
It is known  \cite[Theorem  III.9.2]{Jacobson} that additive group of the ring  $A$ is represented as a direct sum of Abel groups 
\begin{equation}
\label{eq0.0.0}
A=\sum\limits_{l=1}^{t} \oplus
A_l \oplus {\cal U},
\end{equation}
where 
$A_l=e^{(l)}Ae^{(l)}$,
$l=\overline{1,t}$,
${\cal U}=\sum\limits_{l \ne \nu} e^{(l)} A e^{(\nu)}$.
Herewith  $A_l
=S^{(l)}_{k_l,k_l}$,
$l=\overline{1,t}$ are primarily rings with identities 
$e^{(l)}$ i.e. every of it is a  full matrix ring over local left Artinian ring  $S^{(l)}$ wherein it's a preimage of ring $\bar A_l$ when activating modulo  $J(A)$,
${\cal U}$ is a  subgroup of group  $(J(A),+)$.
Moreover herewith 
\begin{equation}
\label{eq0.0.0.0}
J(A)=\sum\limits_{l=1}^t J(A_l) \oplus {\cal U}.
\end{equation}
Further rings  $A_l$, $l=\overline{1,t}$,
we will call  {\em primarily components  \/} of ring  $A$.
If  $t=1$ then  \cite[Theorem  III.8.1]{Jacobson} $A=S_{k,k}$  is a primary ring. 

Let's remember that socle 
$\goth{S}({}_A M)$ of the left  $A$-module  $M$
\cite[Chapter  IV]{Jacobson}
is a completely reducible  left $A$-module satisfying condition: 
$\goth{S}({}_A M)=r_M(J(A))$.
Sum 
$\goth{S}^{(V)}({}_A M)=\sum_{i\in I}\tilde V_i$
of all irreducible submodules 
$\tilde V_i, i\in I$,
of module  ${}_A M$  isomorphic to module  ${}_AV$ is called 
\emph{ homogeneous component of socle  $\goth{S}({}_A M)$
belonging to irreducible left $A$-module $V$. 
\/}
                                               
If  ${}_AM_B$ is some bimodule and  $V$ is a some left $A$-module then $B$-module 
$V^*_B=\mathop{\rm Hom}\nolimits_A(V,M)$ is called \emph{ a right dual to   $V$  with respect to bimodule  $M$\/}
\cite[Chapter  IV]{Jacobson}. 
Relatively  by \emph{left dual to the right 
$B$-module  $W$ with respect to  $
{}_AM_B$\/}
is called  $A$-module 
${}_AW^*=\mathop{\rm Hom}\nolimits_B(W,M)$.

If 
${}_A M_B$ is a quasi-Frobenius bimodule, 
${}_A V$
and $W_B$ are irreducible modules then $V_B^*$ и ${}_AW^*$ are also irreducible (either are non-zero).
Moreover 
\cite[Theorem  1]{Azumaya} these equalities take place: 
$\goth{S}^{(V)}({}_A M)=\goth{S}^{(V^*)}(
M_B)$, $\goth{S}^{(W)}(M_B)=\goth{S}^{(W^*)}({}_A M)$,
where  $V^*_B=\mathop{\rm Hom}\nolimits_A
(V,M)$,
${}_AW^*=\mathop{\rm Hom}\nolimits_B(W,M)$.
Hence  $\goth{S}({}_A M)=\goth{S}(M_B)$.

Left $A$-module 
$M$ is called \emph{ distinguished  \cite{Azumaya} \/} if it contains an isomorphic image of every irreducible  $A$-module or, what's  equivalent, for every irreducible left $A$-module $V$ 
$\goth{S}^{(V)}({}_A M) \ne \theta$.
Left 
$A$-module  $U$ is called 
\emph{ minimal (in  $M$) \cite{Azumaya} \/} if every submodule of ${}_A M$ contains $U$. 
If  minimal submodule exists it is a unique irreducible submodule of  ${}_A M$.

Now we can provide necessary characterizations of quasi-Frobenius bimodules: 

\begin{Theorem}
\label{th1}
Following conditions are equivalent: 

(1) \cite[Theorem  6(iii)]{Azumaya} Bimodule ${}_A M_B$ is a quasi-Frobenius, rings  $A$ and  $B$ are left Artinian and right Artinian respectively. 

(2)
\cite[Theorem  6(i)]{Azumaya}
Ring  $A$ is a left Artinian, left  $A$-module 
$M$
is an injective, finitely generated and distinguished, ring  $\check
B$ 
is an endomorphism ring of module  ${}_A M$.

(3)\cite[Theorem  23.25(6)]{Faith} Every left ideal  $I$ of ring  $A$,
every right  $B$-submodule  $W$ of module  $M$ are satisfies to 
\emph{ \bf annihilator correspondences:  \/}
\begin{equation}
\label{eq1}
\begin{array}{cc}
I=l_A(r_M(I)), & W=r_M(l_A(W)), \\
\end{array}
\end{equation}

every right  ideal  $J$
of ring  $B$, every left  $A$-submodule of   $M$
satisfy to \emph{ \bf annihilator correspondences: \/}
\begin{equation}
\label{eq1+1+1}
\begin{array}{cc}
J=r_B(l_M(I)), & V=l_M(r_B(V)).
\end{array}
\end{equation}

(4) \cite[Theorem  12]{Azumaya} Bimodule  ${}_A M_B$ is a faithful, 
$A$ is a left Artinian ring,  $B$ is a right Artinian ring, $A$ and $B$ have the same number  $t$ of 
non-isomorphic  primitive idempotents, namely: 
$e_1,
\ldots,e_t$, $f_1,\ldots,f_t$ respectively, which under necessary re-enumeration satisfy conditions: 

I. For every  $l \in
\overline{1,t}$
left  $A$-module 
$Mf_l$
contains minimal  $A$-submodule isomorphic to 
 $\bar A \bar e_l$,
an image of 
 $Ae_l$ activating modulo 
 $J(A)e_l$;

II. For every  $l\in
\overline{1,t}$
right  $B$-module  $e_l M$
contains minimal 
 $B$-submodule isomorphic to 
$\bar f_l \bar B$,
an image of  $f_l B$ activating modulo 
$f_l J(B)$;

(5)
\cite[Theorem  6(ii)]{Azumaya}
Ring 
 $B$ is right Artinian, right $B$-module  $M$ is injective, finitely generated and distinguished, ring  $\hat A$ is an endomorphism ring of module  $M_B$.
\end{Theorem}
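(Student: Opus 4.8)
The plan is to prove the five conditions equivalent by running a cycle through condition~(1), extracting each link from the cited sources and checking that their hypotheses and phrasings match the conventions fixed above. No new mathematics is needed: the content of (1)--(5) is already distributed among \cite[Theorem~6]{Azumaya}, \cite[Theorem~12]{Azumaya} and \cite[Theorem~23.25]{Faith}, and the substantive task is bookkeeping --- left/right conventions and the question of which clauses serve as hypotheses and which as conclusions.

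I would take \cite[Theorem~6]{Azumaya} as the backbone. For a bimodule ${}_AM_B$ over rings with identity, Azumaya proves the mutual equivalence of parts (i), (ii), (iii) of that theorem --- which are our (2), (5), (1) respectively --- together with the finite generation of ${}_AM$ and $M_B$ (parts (iv) and (v), already quoted above). Reading this off gives (1)$\Leftrightarrow$(2)$\Leftrightarrow$(5). The only point to check is that our (2) and (5) do not separately record faithfulness of $M$ or the chain condition on the opposite ring, whereas Azumaya's full statement does; but those appear there as part of the conclusion, and since (2)$\Leftrightarrow$(1)$\Leftrightarrow$(5) is exactly that conclusion, the formulations agree. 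Next, (1)$\Leftrightarrow$(4) is \cite[Theorem~12]{Azumaya}: there a faithful quasi-Frobenius bimodule with $A$ left Artinian and $B$ right Artinian is characterized by $A$ and $B$ having the same number $t$ of non-isomorphic primitive idempotents and by the condition that, after a suitable pairing $e_l\leftrightarrow f_l$, the left $A$-module $Mf_l$ has simple socle isomorphic to $\bar A\bar e_l$ while the right $B$-module $e_lM$ has simple socle isomorphic to $\bar f_l\bar B$. With the dictionary recalled in the Preliminaries --- a ``minimal $A$-submodule'' is the unique simple submodule, which equals the socle of $Mf_l$ precisely when that socle is simple, and $\bar A\bar e_l$ is the image of $Ae_l$ modulo $J(A)e_l$ --- this is verbatim our conditions~I and~II.

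For (1)$\Leftrightarrow$(3) I would invoke \cite[Theorem~23.25]{Faith}. The first step is the identification recorded in the text preceding the statement: a quasi-Frobenius bimodule ${}_AM_B$ with $A$ left Artinian and $B$ right Artinian is precisely what Faith calls a (Morita) Artinian duality context, and under this identification condition~(6) of Faith's theorem is exactly the double-annihilator system (\ref{eq1})--(\ref{eq1+1+1}). If one prefers a self-contained argument: (1)$\Rightarrow$(3) holds because a quasi-Frobenius bimodule over Artinian rings realizes a Morita duality, whose defining feature is precisely these double-annihilator correspondences; conversely (3)$\Rightarrow$(1) holds because the correspondences of (\ref{eq1})--(\ref{eq1+1+1}) make the passage $I\mapsto r_M(I)$ (and the three analogous ones) lattice anti-isomorphisms between the one-sided ideals of $A$, resp.\ $B$, and the submodules of $M$ on the opposite side, which transport the descending chain conditions and, through the cogenerator property they force, also the injectivity, so that ${}_AM_B$ is quasi-Frobenius with both rings Artinian. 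Chaining (2)$\Leftrightarrow$(1)$\Leftrightarrow$(5), (1)$\Leftrightarrow$(4) and (1)$\Leftrightarrow$(3) closes the equivalence.

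The main obstacle is not mathematical depth but faithful translation: one must verify that the standing hypotheses of each cited theorem (rings with identity, which side carries the chain condition, whether faithfulness and finite generation are assumed a priori or deduced, and the precise meanings of ``distinguished'' and ``minimal submodule'') match the conventions of this paper, and in particular that no cited clause tacitly presupposes something --- say, that $M$ be finitely generated or faithfully balanced --- which would be unavailable when that clause is used as a hypothesis rather than obtained as a conclusion. Once these compatibility checks are carried out, the theorem is just the assembled statement of the quoted results.
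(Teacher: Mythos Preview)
Your proposal is correct and matches the paper's own treatment: the paper does not supply a proof of this theorem at all, but simply records each condition with an inline citation to the corresponding result in \cite{Azumaya} or \cite{Faith}, treating the equivalence as an assembled statement of known characterizations. Your plan to run the cycle through condition~(1) via \cite[Theorem~6]{Azumaya}, \cite[Theorem~12]{Azumaya}, and \cite[Theorem~23.25]{Faith}, together with the compatibility checks you flag, is exactly the bookkeeping the paper leaves implicit.
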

           
If 
 ${}_A M_B$  is a quasi-Frobenius bimodule satisfying to conditions of Theorem \ref{th1} then 
 \cite[Theorem  7]{Azumaya} there exist one-to-one Galois correspondences between the set of two-sided ideals 
 $I$  of ring  $A$,
the set of  $(A,B)$-subbimodules  $N$ of bimodule  $M$,
and the set of two-sided ideals  $J$ of ring  $B$, which are  established  by equalities: 
$$N=r_M(I),\ \  J=r_B(N);$$
$$N=l_M(J),\ \  I=l_A(N).$$
Herewith  $N$ is a quasi-Frobenius  $(A/I,
B/J)$-bimodule. In particular,  $\goth{S}(M)$ is a quasi-Frobenius  $(A
/J(A),B/J(B))$-bimodule.

Further we will use this 
\begin{state}[A.A.Nechaev, 1996г.]
\label{st1}
Left-faithful and right-faithful  bimodule  ${}_A M_B$ over left Artinian and right Artinian correspondingly rings with units is quasi-Frobenius if and only if left socle 
$\goth{S}({}_A M)$ and right socle  $\goth{S}(M_B)$ of  $M$ coincides and 
$(\bar A,\bar B)$-bimodule 
\begin{equation}
\label{eq0+1}
\goth{S}(M)=\goth{S}({}_A M)=\goth{S}(M_B)
\end{equation}
is quasi-Frobenius. 
\end{state}
 
\begin{proof} For quasi-Frobenius bimodule ${}_A M_B$ we have equality: 
 $\goth{S}({}_A M)=\goth{S}(M_B)$
\cite[Theorem  1]{Azumaya}.
Because  ${}_A M_B$ is satisfying to conditions of Theorem  \ref{th1} we have that 
\cite[Theorem  7]{Azumaya} $\goth{S}(M)$ is a quasi-Frobenius  $(\bar
A,\bar B)$-bimodule. 

Conversely let equalities  (\ref{eq0+1}) take place and bimodule 
${}_{\bar A}\goth{S}(M)_{\bar B}$ is a quasi-Frobenius.  Then bimodule  ${}_{\bar A} \goth{S}(M)_{\bar B}$ satisfies conditions of point 4 of Theorem  \ref{th1} and according to it notations these equalities take place: 
 $\goth{S}(e_l M_B)=\goth{S}(\bar e_l
\goth{S}(M)_{\bar B})$ and  $\goth{S}({}_A Mf_l)=\goth{S}({}_{\bar
A} \goth{S}(M) \bar f_l)$. Hence if conditions  of point 4 of Theorem  \ref{th1} take place  relatively to bimodule  ${}_{\bar A} \goth{S}(M)_{
\bar B}$ then it implies satisfying of the same conditions in relation to bimodule  ${}_A M_B$.
\end{proof}

\begin{lemm}
\label{l1(b)}
For Artinian duality context 
${}_A M_B$ this equality 
takes place:
$\mathop{\rm \bf Z}(\hat A)=
\mathop{\rm \bf Z}(\check B)$.
\end{lemm}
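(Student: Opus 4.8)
The plan is to deduce both inclusions $\mathop{\rm \bf Z}(\hat A)\subseteq\mathop{\rm \bf Z}(\check B)$ and $\mathop{\rm \bf Z}(\check B)\subseteq\mathop{\rm \bf Z}(\hat A)$ from the fact that, for an Artinian duality context, the rings of translations coincide with full endomorphism rings. Indeed such a bimodule is in particular quasi-Frobenius with $A$ left Artinian and $B$ right Artinian, so all the equivalent conditions of Theorem~\ref{th1} are available; I shall use condition (2), which gives $\check B=\mathop{\rm End}({}_AM)$, and condition (5), which gives $\hat A=\mathop{\rm End}(M_B)$. The remaining ingredient is elementary and holds for any bimodule: by associativity of $(am)b$ one has $\hat A\subseteq\mathop{\rm End}(M_B)$ and $\check B\subseteq\mathop{\rm End}({}_AM)$; moreover an operator $z\in\mathop{\rm End}(M)$ lies in $\mathop{\rm End}({}_AM)$ precisely when it commutes with every $\hat a$, and in $\mathop{\rm End}(M_B)$ precisely when it commutes with every $\check b$, since both statements merely unwind the definitions $\hat a(m)=am$, $\check b(m)=mb$.

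Granting this, the forward inclusion is immediate. Take $z\in\mathop{\rm \bf Z}(\hat A)$. On the one hand $z$ commutes with every element of $\hat A$, in particular with every $\hat a$, hence $z\in\mathop{\rm End}({}_AM)$, and by Theorem~\ref{th1}(2) this means $z\in\check B$. On the other hand $z\in\mathop{\rm \bf Z}(\hat A)\subseteq\hat A\subseteq\mathop{\rm End}(M_B)$, so $z$ commutes with every $\check b$, i.e. $z$ centralizes $\check B$. Being an element of $\check B$ that centralizes $\check B$, $z$ lies in $\mathop{\rm \bf Z}(\check B)$. The reverse inclusion is proved by the same argument with the roles of left and right interchanged: if $z\in\mathop{\rm \bf Z}(\check B)$ then $z$ commutes with every $\check b$, so $z\in\mathop{\rm End}(M_B)=\hat A$ by Theorem~\ref{th1}(5), while $z\in\check B\subseteq\mathop{\rm End}({}_AM)$ shows that $z$ commutes with every $\hat a$, hence $z$ centralizes $\hat A$; thus $z\in\mathop{\rm \bf Z}(\hat A)$. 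Combining the two inclusions yields $\mathop{\rm \bf Z}(\hat A)=\mathop{\rm \bf Z}(\check B)$.

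I do not expect a serious obstacle here: the only substantive point is to invoke the balanced-bimodule characterizations of Theorem~\ref{th1}, after which the argument is purely formal. It is worth recording where the hypothesis is actually used, namely in the implication ``$z$ commutes with every $\hat a$'' $\Rightarrow$ ``$z\in\check B$'', which rests on the equality $\mathop{\rm End}({}_AM)=\check B$ (and its mirror image $\mathop{\rm End}(M_B)=\hat A$): for a general bimodule $\mathop{\rm End}({}_AM)$ can be strictly larger than $\check B$, and then the equality of centers may fail, so the Artinian duality context assumption cannot be dropped. Note finally that by Lemma~\ref{l1(a)} one already has $Z=\hat A\cap\check B\subseteq\mathop{\rm \bf Z}(\hat A)\cap\mathop{\rm \bf Z}(\check B)$ for free; the content of the present lemma is exactly that under the standing hypothesis these two centers are one and the same ring.
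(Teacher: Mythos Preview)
Your proof is correct and follows essentially the same approach as the paper: both arguments use the balanced-bimodule characterizations from Theorem~\ref{th1} (namely $\check B=\mathop{\rm End}({}_AM)$ and $\hat A=\mathop{\rm End}(M_B)$) together with the element-wise commutativity of $\hat A$ and $\check B$ to push a central element of one ring into the center of the other. Your write-up is simply more explicit than the paper's, which compresses the step ``$z\in\hat A$ and $z\in\check B$ implies $z$ is central in $\hat A$'' into a single clause.
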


\begin{proof}
According to Theorem  \ref{th1}(2), $\hat A=\mathop{\rm End}(M_B)$.
Because an arbitrary element  $z \in \mathop{\rm \bf Z}(\check B)$
contains in $\mathop{\rm End}(M_B)$ then  $\mathop{\rm \bf Z}(\check B)
\subset \hat A$ what follows
$\mathop{\rm\bf Z}(\check B)\subset\mathop{\rm\bf Z}(\hat A)$.
Converse inclusion may be stated by the same method. 
\end{proof}

  It is also true                                                            
\begin{state}
\label{st3}
If 
$M$ is a bimodule over primary left-Artinian and right-Artinian  rings with identities respectively
 $A=S_{k,k}$ and  $B=T_{l,l}$ of the form:  
$$
{}_AM_B={}_{S_{k,k}}{W_{k,l}}_{T_{l,l}},
$$
then bimodule  ${}_AM_B$ is a quasi-Frobenius if and only if bimodule  ${}_SW_T$ is a quasi-Frobenius.
\end{state}

\begin{proof}
Let suppose that bimodule  ${}_AM_B$ is a quasi-Frobenius. 
Let  $e$ be a primitive idempotent of ring  $A$. Then according to point (4) of Theorem  \ref{th1}      right  $B=T_{l,l}$-module  $eM=W^{(l)}$ contains a minimal submodule. 
Let suppose now that right  $T$-module  $W$ does not contain minimal submodule.
It means that module  $W_T$
contains at least two diffrerent irreducible submodules  $U$ and  $V$.
Then module  $W^{(l)}_{T_{l,l}}$ contains at least two different irreducible submodules 
 $U^{(l)}_{T_{l,l}}$ and 
$V^{(l)}_{T_{l,l}}$. 
It is a contradiction to existence in  $W^{(l)} {}_{T_{l,l}}$ of minimal submodule. Hence right module  $W {}_T$ also contains minimal submodule. 

Analogously may be shown that existence of minimal submodule in  ${}_{S_{k,k}} W^{(k)}$ implies existence of minimal submodule in  ${}_SW$.

Hence according to point (4) of Theorem  \ref{th1} bimodule  ${}_SW {}_T$  is a quasi-Frobenius.

The converse implication may be proven by the same  arguments  in reverse order.   
\end{proof}

\section{The Proof of Theorem \ref{TH4}}

For convenience of reading let us duplicate the text of Theorem  \ref{TH4}.

\begin{Theorem}
\label{th4+2}
       
Let bimodule 
 ${}_A M_B$ be faithful as a left  $A$-module  and as a right 
$B$-module together,
bimodule of translations  ${}_C M_Z$
of  ${}_A M_B$ is a quasi-Frobenius,  $Z$ is local Artinian ring, 
$Q=\mathop{\rm\bf I}(\bar Z_Z)$ is an injective hull of unique irreducible  $Z$-module  $_Z\bar Z$.

Then:                  
 
I. (a) Isomorphism of rings takes place:  $C \cong Z_{n,n}$. 

(b) Isomorphism of bimodules takes place:  ${}_CM_Z \cong {}_{Z_{n,n}}Q^{(n)}_Z$.

(c) Rings  $A$, $B$ are primary left-Artinian and right-Artinian rings with identities, herewith factor-rings $\bar A=A/J(A)$,
$\bar B=B/J(B)$
of  $A$ and  $B$ respectively activated modulo Jacobson radicals are equivalent elements of the set 
 $\goth{B}(\bar Z)$.
         
(d) Bimodule  ${}_{\bar A}\goth{S}({}_CM)_{\bar B}$ is a quasi-Frobenius. 

II.Bimodule ${}_AM_B$ is a quasi-Frobenius if and only if the  left socle  $\goth{S}({}_AM)$ and the right socle  $\goth{S}(M_B)$ of 
 ${}_AM_B$  coincide with $\goth{S}({}_CM)$.
                             
III.If additional condition takes place: 
Brauer group  $\mathop{\rm \bf B}(\bar Z)$ of the field  $\bar Z$ is trivial, then bimodule 
 ${}_AM_B$  is a quasi-Frobenius and moreover for some   $r,s \in \nqz{N}$ such that  $n=r\cdot s$ isomorphisms of rings and bimodules respectively take place: 
                                         
(a) $A \cong Z_{r,r}$, $B \cong Z_{s,s}$,

(b) ${}_A M_B \cong {}_{Z_{r,r}} {Q_{r,s}}_{Z_{s,s}}$.
\end{Theorem}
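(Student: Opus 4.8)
The plan is to exploit the structural theory of quasi-Frobenius bimodules over Artinian rings (Theorem~\ref{th1}), together with the facts about Brauer groups quoted above, and to transport all the information from the well-behaved bimodule ${}_CM_Z$ back to ${}_AM_B$ via the canonical epimorphism $\lambda:\hat A\otimes_Z\check B\to C$ of Statement~\ref{st2}. First I would establish part~I. Since ${}_CM_Z$ is quasi-Frobenius and $Z$ is local Artinian with unique irreducible module $\bar Z$, I would apply Theorem~\ref{th1}(2),(5): the module ${}_CM$ is injective, finitely generated and distinguished, and $Z\cong\mathop{\rm End}({}_CM)$ acts on the right. Over the local ring $Z$ there is (up to isomorphism) only one indecomposable injective, namely $Q=\mathop{\rm\bf I}(\bar Z_Z)$, so $M_Z\cong Q^{(n)}$ for some $n$; then $C=\mathop{\rm End}(M_Z)\cong\mathop{\rm End}(Q^{(n)})\cong(\mathop{\rm End}Q)_{n,n}\cong Z_{n,n}$, where one uses that $\mathop{\rm End}(Q_Z)\cong Z$ because $Q$ is the injective hull of the simple module over a commutative local Artinian ring (this is the standard self-duality $\mathop{\rm Hom}_Z(-,Q)$ argument). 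This gives I(a) and I(b). Part~I(d) is then immediate from the remark after Theorem~\ref{th1}: $\goth{S}({}_CM)$ is a quasi-Frobenius bimodule over $C/J(C)\cong \bar Z_{n,n}$ on the left, and since $Z$ is already local the relevant quotient on the right is $\bar Z$ itself; re-reading this through $\lambda$ and the isomorphisms $\hat A\cong A$, $\check B\cong B$ (the modules are faithful by hypothesis) identifies $\bar A$ and $\bar B$ with subalgebras of $\bar Z_{n,n}$, whence I(c) will follow once we know $A,B$ are primary.

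Next I would prove that $A$ and $B$ are primary Artinian. Here the key leverage is $C\cong Z_{n,n}$, which is a \emph{primary} ring (a full matrix ring over a local ring). The ring $\hat A\otimes_Z\check B$ surjects onto $C$, and $\hat A$, $\check B$ are commuting subrings of $C=\mathop{\rm End}(M)$ whose mutual image generates $C$; so $A\cong\hat A$ and $B\cong\check B$ embed into $Z_{n,n}$. Over $\bar Z$, passing to $C/J(C)\cong\bar Z_{n,n}$, the images $\bar A,\bar B$ are commuting $\bar Z$-subalgebras of the central simple $\bar Z$-algebra $\bar Z_{n,n}$ that together generate it; by the double centralizer theorem (or directly by $\S12.4$ of Pierce, the multiplication map $\bar A\otimes_{\bar Z}\bar B\to\bar Z_{n,n}$ being onto forces it to be an isomorphism for dimension reasons once both factors are simple with center $\bar Z$) one gets that $\bar A$ and $\bar B$ are central simple $\bar Z$-algebras with $\bar A\otimes_{\bar Z}\bar B\cong\bar Z_{n,n}$, i.e.\ $[\bar A][\bar B]=[\bar Z]$ in $\mathop{\rm\bf B}(\bar Z)$, so $\bar B\cong\bar A^{op}$ and in particular $\bar A,\bar B$ are \emph{equivalent} in $\goth{B}(\bar Z)$ — this is exactly I(c). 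That $\bar A$ is \emph{simple} (not merely semisimple) is what makes $A$ primary via the decomposition (\ref{eq0.0.0}); I expect this simplicity to require a genuine argument, and I flag it below as the main obstacle. Granting it, $A=S_{r,r}$, $B=T_{l,l}$ for local Artinian $S,T$ with $\bar S=\Delta$, $\bar T=\Delta'$ bodies over $\bar Z$, and $n=rk=ls$ where $k,s$ are the matrix sizes coming from $\Delta,\Delta'$.

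For part~II, I would invoke Statement~\ref{st1}: the faithful bimodule ${}_AM_B$ over the Artinian rings $A,B$ is quasi-Frobenius iff $\goth{S}({}_AM)=\goth{S}(M_B)$ and this common socle is a quasi-Frobenius $(\bar A,\bar B)$-bimodule. Now $\goth{S}({}_CM)=r_M(J(C))$, and since $C$ is generated by $\hat A$ and $\check B$ with $J(C)$ controlled by $J(A),J(B)$ (via $\lambda$ and $(\ref{eq0.0.0.0})$), one checks $r_M(J(A))\supseteq r_M(J(C))$ and $l_M(J(B))\supseteq r_M(J(C))$ always; so the substantive condition is precisely the reverse inclusions, i.e.\ $\goth{S}({}_AM)=\goth{S}(M_B)=\goth{S}({}_CM)$. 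When this holds, the common socle \emph{is} $\goth{S}({}_CM)$, which is quasi-Frobenius over $\bar A,\bar B$ by I(d); conversely a quasi-Frobenius ${}_AM_B$ forces $\goth{S}({}_AM)=\goth{S}(M_B)$ by Azumaya's Theorem~1, and a short annihilator computation identifies it with $\goth{S}({}_CM)$. For part~III, triviality of $\mathop{\rm\bf B}(\bar Z)$ makes $\bar A\otimes_{\bar Z}\bar B\cong\bar Z_{n,n}$ split completely: $\bar A\cong\bar Z_{r,r}$, $\bar B\cong\bar Z_{s,s}$ with $n=rs$ (the bodies $\Delta,\Delta'$ collapse to $\bar Z$). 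Then $S,T$ are local Artinian with residue field $\bar Z$ and $\mathop{\rm End}$'s landing in $Z$; by faithfulness and the module structure $M_Z\cong Q^{(n)}$ one shows $S\cong Z\cong T$, giving $A\cong Z_{r,r}$, $B\cong Z_{s,s}$, which is III(a). Finally the bimodule ${}_AM_B\cong{}_{Z_{r,r}}(Q_{r,s})_{Z_{s,s}}$ follows by Statement~\ref{st3} (reducing quasi-Frobeniusness of a matrix bimodule to that of ${}_ZQ_Z$, which is clear since $Q=\mathop{\rm\bf I}(\bar Z)$ is the dualizing module of the local Artinian ring $Z$) together with the identification ${}_CM_Z\cong{}_{Z_{n,n}}Q^{(n)}_Z$ from I(b) after blocking the $n\times1$ columns as $r\times s$ matrices; this is III(b), and in particular ${}_AM_B$ is quasi-Frobenius.

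\medskip

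The hard part will be the \emph{simplicity} of $\bar A=A/J(A)$ (equivalently, that $A$ is primary, i.e.\ $t=1$ in the decomposition (\ref{eq0.0.0})), and symmetrically for $B$. A priori $\bar A$ is only semisimple, a product $\bar A_1\times\cdots\times\bar A_t$, and one must rule out $t>1$. The mechanism should be: the commuting images of $\hat A$ and $\check B$ generate the \emph{primary} ring $C\cong Z_{n,n}$, whose center is the local ring $Z$; a nontrivial central idempotent of $\bar A$ would, via $\lambda$ and the faithful action on $M$, produce a nontrivial central idempotent of $C/J(C)\cong\bar Z_{n,n}$, contradicting its simplicity — but making this precise requires carefully tracking idempotents through $\lambda$ (they need not lift uniquely) and through the passage to residue rings, and controlling $J(C)$ in terms of $J(A),J(B),Z$ using (\ref{eq0.0.0.0}). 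A secondary technical point is verifying $\mathop{\rm End}(Q_Z)\cong Z$ and the self-duality needed for I(a),(b); this is standard Matlis-type duality for commutative local Artinian rings but the paper may want it spelled out. Everything else is bookkeeping with the results already assembled in the Preliminaries.
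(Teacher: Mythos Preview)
Your overall strategy matches the paper's: establish $C\cong Z_{n,n}$ and $M_Z\cong Q^{(n)}$ via Theorem~\ref{th1}(5) and $\mathop{\rm End}(Q_Z)\cong Z$; show $\hat A,\check B$ are Artinian subrings of $C$ with $J(\hat A)=J(C)\cap\hat A$; pass to $\bar C\cong\bar Z_{n,n}$ and use its simplicity to force $\hat A,\check B$ primary; then run the Brauer-group argument. You have also correctly located the genuine difficulty (simplicity of $\bar A$), and your proposed mechanism --- that a nontrivial central idempotent of $\bar A$ would survive in the simple ring $\bar C$ --- is essentially what the paper does, phrased slightly differently: the paper decomposes $\bar C=\bigoplus_{i,j}[\omega(\hat A_i),\omega(\check B_j)]$ and observes that simplicity leaves only one summand, so the identities of the remaining $\hat A_i$ are nilpotent, a contradiction.

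There is, however, one genuine gap. You claim I(d) is ``immediate'' from the fact that ${}_{\bar C}\goth{S}(M)_{\bar Z}$ is quasi-Frobenius. It is not: quasi-Frobeniusness over $(\bar C,\bar Z)$ does not formally transfer to quasi-Frobeniusness over $(\bar A,\bar B)$, even after identifying $\bar C\cong\omega(\hat A)\otimes_{\bar Z}\omega(\check B)$. The paper proves I(d) by a dimension count: from $\bar C\cong(\Delta\otimes_{\bar Z}\nabla)_{rs,rs}$ and $\Delta\otimes_{\bar Z}\nabla\cong\bar Z_{u,u}$ with $u=\dim_{\bar Z}\Delta$, one gets $\dim_{\bar Z}\goth{S}(M)=rsu$; comparing this with the Peirce decomposition $\goth{S}(M)=\bigoplus_{i,u}e_{ii}\goth{S}(M)f_{uu}$ forces the $(\Delta,\Delta)$-bimodule $\goth{S}=e_{11}\goth{S}(M)f_{11}$ to be one-dimensional over $\Delta$, hence ${}_\Delta\goth{S}_\Delta\cong{}_\Delta\Delta_\Delta$, and then Statement~\ref{st3} gives I(d). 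Without this count, $e\goth{S}(M)$ could fail to have a minimal $\bar B$-submodule. Two smaller points: first, watch the op's --- the isomorphism is $\omega(\hat A)\otimes_{\bar Z}\omega(\check B)\cong\bar C$ with $\omega(\check B)\cong\bar B^{op}$, so one gets $[\bar A][\bar B^{op}]=[\bar Z]$, i.e.\ $[\bar A]=[\bar B]$ (equivalence), not $\bar B\cong\bar A^{op}$; second, for III(a) the paper does \emph{not} argue $S\cong Z$ for the local ring $S$ with $A=S_{r,r}$. Instead it lifts matrix units to get a subring $Z_{r,r}\hookrightarrow\hat A$, shows that $[\hat\phi(Z_{r,r}),\check\psi(Z_{s,s})]$ already equals $C$ (because the tensor product $Z_{r,r}\otimes_Z Z_{s,s}\cong Z_{n,n}$ acts faithfully), and concludes $\hat A=\hat\phi(Z_{r,r})$. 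Your ``$S\cong Z$'' route would need its own justification.
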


\begin{proof}   
     \ \ \  I(a).\ 
Let  ${}_CM_Z$ be an Artinian duality context. Then according to Theorem 
 \ref{th1}(5) the right module $M_Z$ is an injective and hence it is a direct sum of some  $n\in\nqz{N}$ right modules  $Q_Z$:
\begin{equation}
\label{eq.th4.1}
M_Z\cong Q_Z^{(n)},
\end{equation}
where  $Q=\mathop{\rm\bf I}(\bar Z_Z)$  is an injective hull of unique irreducible  $Z$-module  $\bar Z=Z/J(Z)$.

By the same Theorem the equality take place:  $\hat C=C=\mathop{\rm End}(M_Z)$.
The existence of  isomorphism  (\ref{eq.th4.1}) implies  isomorphism: 
\begin{equation}
\label{eq.th4.2}
C\cong D_{n,n},
\end{equation}
where  $D=\mathop{\rm End}(Q_Z)$. Because of  
\cite[Proposition  23.33]{Faith}, \cite{Muller},
equality takes place:  $D=Z$ and isomorphism  (\ref{eq.th4.2}) takes a form: 
\begin{equation}
\label{eq.th4.3}
C\cong Z_{n,n}.
\end{equation}

I(b).\ From  (\ref{eq.th4.1}) and  (\ref{eq.th4.3}) follows that 
\begin{equation}
\label{eq.th4.3+1}
{}_CM_Z\cong{}_{Z_{n,n}}Q^{(n)}_Z.
\end{equation}
                                                          
I(c).\
From  (\ref{eq.th4.3}) follows that  $C$ is finitely generated left $Z$-module and finitely generated right  $Z$-module simultaneously. Because  $Z$ is a commutative Artinian ring then  $C$ is also Artinian ring ( and not only left-Artinian how  the Theorem  \ref{th1}(1) states ).

Under conditions of this Theorem left module  ${}_AM$ is a faithful. Hence rings  $A$
and  $\hat A$ are isomorphic. Because ring $\hat A$ is a subring of $C$ and hence contains ring  $Z$, so module  $\hat A_Z$ is a submodule of the finitely generated module 
 $C_Z$ over commutative Artinian ring  $Z$. Hence $\hat A$ is a two-sided Artinian ring and finitely generated  $Z$-module. Because of the isomorphism,  ring $A$ is also two-sided Artinian.

Analogously may be established that  $\check B$ is a two-sided Artinian ring and finitely generated $Z$-module and that ring  $B$ is also two-sided Artinian. 

Let's establish now the relation between Jacobson radical   $J(C)$ of ring  $C$  and Jacobson  radicals $J(\hat A), J(\check B)$ of rings  $\hat A$,   $\check B$ respectively.

Let  $\hat a\in J(\hat A)$. Then right ideal  $\hat aC$ generated by element  $\hat a$ in ring  $C$ is a nil-ideal. Indeed for arbitrary element  $c=\sum\limits_{(i)}\hat a_i\check b_i\in C$ because of the element-wise commutativity of rings  $\hat A$ and  $\check B$  the equality takes  place: 
\begin{equation}
\label{eq.th4.4}
(\hat a c)^{n(\hat A)}=\sum_{\left(i_1,\ldots,i_{n(\hat A)}\right)}
\hat a\hat a_{i_1}\cdots\hat a\hat a_{i_{n(\hat A)}}\cdot \check b_{i_1}
\cdots\check b_{n(\hat A)},
\end{equation}
where $n(\hat A)$ is a nil-potent index of ideal $J(\hat A)$. Because for arbitrary set  $\left(i_1,\ldots,i_{n(\hat A)}\right)$  the inclusion takes place: 
$$
\hat a\hat a_{i_1}\cdots\hat a\hat a_{i_{n(\hat A)}}
\in J(\hat A)^{n(\hat A)}=\hat 0,
$$
then  $(\hat a c)^{n(\hat A)}=\hat 0$.

Because every one-side nil-ideal of Artinian ring contains in Jacobson radical of it ring we have:  
 $\hat aC<J(C)$. Besides that,  $\hat a\in\hat aC$. So we have proved inclusion:  $J(\hat A)\subset J(C)$.

From other side the set $J(C)\cap \hat A$ is a two-sided nil-potent ideal of ring  $\hat A$. Hence, 
 $J(C)\cap\hat A\lhd J(\hat A)$. 

So we have a sequence of inclusions:  $J(\hat A)\subset J(C)\cap
J(\hat A)\subset J(C)\cap\hat A\lhd J(\hat A)$ which means that all inclusions are equalities: 
\begin{equation}
\label{eq.th4.5}
J(\hat A)=J(C)\cap J(\hat A)=J(C)\cap\hat A
\end{equation}
Analogously may be established that  
\begin{equation}
\label{eq.th4.6}
J(\check B)=J(C)\cap J(\check B)=J(C)\cap\check B.
\end{equation}

Let's denote by $\goth{S}(M)$ the socle of the left module  ${}_CM$ which is equal to 
\cite[Theorem  1]{Azumaya} the socle of the right module $M_Z$.
Because of the 
\cite[Theorem  7]{Azumaya} bimodule  ${}_{\bar C}\goth{S}(M)_{\bar Z}$ is a quasi-Frobenius. An isomorphism 
\begin{equation}
\label{eq.th4.7}
{}_{\bar C}\goth{S}(M)_{\bar Z} \cong {}_{\bar Z_{n,n}}{\bar Z}_{\bar
Z}^{(n)}.
\end{equation}
may be established by the same arguments as an isomorphism  (\ref{eq.th4.3+1}).
From  (\ref{eq.th4.7}) follows that  $\mathop{\rm\bf Z}(\bar C)=\bar Z$.

Let's show that rings  $\hat A$ and  $\check B$ are primary. Let 
$\hat A_1,\ldots,\hat A_t$ and  $\check B_1,\ldots,\check B_l$ are all primary components of rings 
 $\hat A$ and  $\check B$ respectively. 

Let  $\omega:C\to C/J(C)$ be a canonical epimorphism. Then  
\begin{equation}
\label{eq.th4.8}
\omega(C)=[\omega(\hat A),\omega(\check B)],
\end{equation}
whereas 
\begin{equation}
\label{eq.th4.9}
\omega(\hat A)\cong \hat A/(\hat A\cap J(C))=\hat A/J(\hat A)=\bar{\hat A}
\cong \bar A
\end{equation}
and 
\begin{equation}
\label{eq.th4.10}
\omega(\check B)\cong \check B/(\check B\cap J(C))=\check B/J(\check B)
=\bar{\check B} \cong {\bar B}^{op}.
\end{equation}
Hence because of the  \cite[Theorem III.9.2]{Jacobson}
\begin{equation}
\label{eq.th4.11}
\bar C=[\omega(\hat A),\omega(\check B)]=\sum_{i=\overline{1,t} \atop
j=\overline{1,l}} \oplus [\omega(\hat A_i),\omega(\check B_j)].
\end{equation}

Since $\bar C\cong {\bar Z}_{n,n}$  is a simple ring then between pair-wise orthogonal rings 
 $[\omega(\hat A_i),\omega(\check B_j)]$, $i=\overline{1,
t}, j=\overline{1,l}$, there is only one non-zero. Let's denote it
$[\omega(\hat A_1),\omega(\check B_1)]$, and for all  $i,j \ge2$
inclusions take place:  $\hat A_i, \check B_j < J(C)$. Hence every element of every of rings $\hat A_i, \check B_j$, $i,j \ge2$,  is a nil-potent what is impossible becaus every of those rings contains an idempotent. It is a contradiction which shows that   $t=l=1$ and rings $\hat A, \check B$ are primary.

Let's note now that because of the pare-wise commutativity of rings   $\omega(\hat A)$ and 
$\omega(\check B)$ inclusions take place: 
\begin{equation}
\label{eq.th4.12}
\bar Z=\mathop{\rm \bf Z}(\bar C)<\mathop{\rm \bf Z}(\omega(\hat A)),
\mathop{\rm\bf Z}(\omega(\check B))< \mathop{\rm\bf Z}(\bar C)=\bar Z.
\end{equation}
Hence  $\bar Z=\mathop{\rm\bf Z}(\omega(\hat A))=\mathop{\rm\bf Z}(
\omega(\check B))$. It means that rings  $\omega(\hat A)$ and 
$\omega(\check B)$ are central  $\bar Z$-algebras. 

Because rings  $\hat A, \check B$ are primary then isomorphisms take place: 
$\omega(\hat A)\cong \overline{\hat A}$, $\omega(\check B)\cong
\overline{\check B}$,
and rings  $\omega(\hat A)$, $\omega(\check B)$ are simple algebras. 

Since modules  ${}_{Z}\hat A$, ${}_Z\check B$ are finitely generated then 
 $\bar Z$-algebras  $\omega(\hat A)$, $\omega(\check B)$  are of finite dimension. 

So $\omega(\hat A), \omega(\check B)\in\goth{B}(\bar Z)$. According to Structure Theorem of Wedderburn--Artin there exist algebras with division (body) $\Delta,
\nabla$ and naturals  $r, s$ such that  $\omega(\hat A)=\Delta_{r,r}$,
$\omega(\check B)=\nabla_{s,s}$. Herewith  $\mathop{\rm\bf Z}(\Delta)=
\mathop{\rm\bf Z}(\nabla)=\bar Z$.

Because $\bar C=[\omega(\hat A),\omega(\check B)]$ and rings  $\omega(\hat
A)$, $\omega(\check B)$ are pair-wise commutative then the socle 
 $\goth{S}({}_CM)$ is  $(\omega(\hat A),\omega(\check B)^{op})$-bimodule. 
Consequently according to Statement  \ref{st2} there exists an epimorphism 
$\lambda:\omega(\hat A)\mathop{\otimes}\limits_{\bar Z}\omega(\check B)
\to\bar C$.
Herewith 
 $\ker \lambda=l_{
\omega(\hat A)\mathop{\otimes}\limits_{\bar Z}\omega(\check B)
}(\goth{S}(M))$ is a left annihilator of module  $\goth{S}(M)$ in ring 
$\omega(\hat A)\mathop{\otimes}\limits_{\bar Z}\omega(\check B)
$.
Because according to \cite[$\S12.4$, Proposition  b]{Pierce}
$\omega(\hat A)\mathop{\otimes}\limits_{\bar Z}\omega(\check B)
 \cong (\Delta \mathop{\otimes}\limits_{\bar Z}\nabla)_{rs,rs}$ is a simple algebra and left module 
${}_{\omega(\hat A)\mathop{\otimes}\limits_{\bar Z}\omega(\check B)
}\goth{S}({}_CM)$ is non-zero then   $\ker\lambda=\hat 0$. 
Thus 
\begin{equation}
\label{eq.th4.13}
\bar C\cong
(\Delta \mathop{\otimes}\limits_{\bar Z}\nabla)_{rs,rs}.
\end{equation}
Since  $\bar C\cong{\bar Z}_{n,n}$ then Brauer classes of equivalence  of 
$[\Delta\otimes\nabla]$ and  $[\bar Z]$ coincide i.e. classes  $[\Delta]$ and 
$[\nabla]$ are \cite[$\S12.5$, Proposition  a]{Pierce} mutually inverse elements of group 
 $\mathop{\rm\bf B}(\bar Z)$ and this equalities take place: 
$[\Delta]^{-1}=[\Delta^{op}]=[\nabla]$. Thereby  \cite[$\S12.5$, Proposition  b(ii)]{Pierce} 
 $\Delta^{op}\cong\nabla$.

Because of the  (\ref{eq.th4.9}) there exists an isomorphism $\mu:\omega(\hat A)\to\bar A$.
Since $\mathop{\rm\bf Z}(\omega(\hat A))=\bar Z$ then 
$\mathop{\rm\bf Z}(\bar A)=\mu(\bar Z)\cong \bar Z$. Consequently over an algebra  $\bar A$
may be established the structure of  $\bar Z$-module by have a put for all  $\bar a\in\bar A,
\bar z\in\bar Z$ $\bar z\cdot\bar a=\mu\left(\bar z\mu^{-1}(\bar a)\right)$.
Then  $\bar A$ is finite dimension central simple  $\bar Z$-algebra because it inherits properties of 
 $\bar Z$-algebra  $\omega(\hat A)$. Herewith basic algebras of $\bar A$ and  $\omega(\hat A)$ are isomorphic to algebra with division   $\Delta$.

Analogously because of (\ref{eq.th4.10}) there exists an isomorphism  $\nu:\omega(
\check B)^{op}\to\bar B$. As above $\mathop{\rm \bf Z}(\omega(\check B))
\cong \mathop{\rm\bf Z}(\omega(\check B)^{op})\cong\bar Z$ what mean 
$\mathop{\rm \bf Z}(\bar B)=\nu(\bar Z)\cong \bar Z$.
Structure of  $\bar Z$-module over  $\bar B$ is given by:
for all  $\bar b\in \bar B, \bar z\in \bar Z$ let's put 
$\bar z\cdot\bar b=\nu\left(\bar z\nu^{-1}(\bar b)\right)$.
Now  $\bar B$ inherits properties of  $\bar Z$-algebra $\omega(\check B)^{op}$ and 
becomes because of that finite dimensional central simple  $\bar Z$-algebra.
Because basic algebras of  $\bar B$ and  $\omega(\check B)^{op}$
are isomorphic to algebra with division  $\nabla^{op}$ then we have only to note that since 
 $\Delta\cong \nabla^{op}$ basic algebras of  $\bar Z$-algebras $\bar A$ and  $\bar B$ are isomorphic too.

                                                                     
I(d).\
Because of  \cite[$\S12.4$, Proposition  b(iv)]{Pierce} an isomorphism takes place:
\begin{equation}
\label{eq.th4.14}
\Delta \mathop{\otimes}\limits_{\bar Z}\nabla
\cong
\Delta \mathop{\otimes}\limits_{\bar Z}\Delta^{op}
\cong
{\bar Z}_{u,u},
\end{equation}
where  $u=\mathop{\rm dim}_{\bar Z}\Delta$.

Hence according to (\ref{eq.th4.13}) the equality takes place: 
\begin{equation}
\label{eq.th4.15}
\mathop{\rm dim}\nolimits_{\bar Z}\goth{S}(M)=r\cdot s\cdot\mathop{\rm
dim}\nolimits_{\bar
Z}\Delta.
\end{equation}

Now let's calculate dimension of the vector space $\goth{S}(M)_{\bar Z}$
in other way. 

Let  $e_{i,j}, i,j=\overline{1,r}$, $f_{u,v}, u,v=\overline{1,s}$ be a full systems of matrix units of rings  $\omega(\hat A)$ and  $\omega(\check B)$ respectively. Since 
 $\goth{S}(M)$ is $(\omega(\hat A),\omega(\check B)^{op})$-bimodule then two-sided Pierce decomposition of module  $\goth{S}(M)$ into direct sum of Abelian groups: 
\begin{equation}
\label{eq.th4.16}
\goth{S}(M)=\mathop{\sum_{i=\overline{1,r} \atop u=\overline{1,s}}\oplus}\
e_{i,i}\goth{S}(M)f_{u,u}.
\end{equation}
Every summand  $e_{i,i}\goth{S}(M)f_{u,u}$, $i=\overline{1,r},
u=\overline{1,s}$, is a   $(e_{i,i}\omega(\hat A)e_{i,i}, f_{u,u}
\omega(\check B)^{op}f_{u,u})$-bimodule. All these bimodules are pair-wise isomorphic and because  
$$
e_{i,i}\omega(\hat A)e_{i,i}\cong \Delta,\ \ \omega(\hat A)\cong\Delta_{r,r},
$$
$$
f_{u,u}\omega(\check B)^{op}f_{u,u}\cong\nabla^{op}\cong \Delta,\ \
\omega(\check B)^{op} \cong \Delta_{s,s},
$$
the isomorphism takes place: 
\begin{equation}
\label{eq.th4.17}
{}_{\omega(\hat A)}\goth{S}(M)_{\omega(\check B)^{op}}
\cong
{}_{\Delta_{r,r}}{\goth{S}_{r,s}}_{\Delta_{s,s}},
\end{equation}
where  $\goth{S}=e_{1,1}\goth{S}(M)f_{1,1}$.

From relation  (\ref{eq.th4.17}) equalities follow: 
\begin{equation}
\label{eq.th4.18}
\mathop{\rm dim}\nolimits_{\bar Z}\goth{S}(M)=
r\cdot s\cdot\mathop{\rm dim}\nolimits_{\bar Z}\Delta\cdot
\mathop{\rm dim}\nolimits_\Delta\goth{S}
=
r\cdot s\cdot\mathop{\rm dim}\nolimits_{\bar Z}\Delta\cdot
\mathop{\rm dim}\goth{S}_\Delta,
\end{equation}
where  $\mathop{\rm dim}_\Delta\goth{S}$ and  $\mathop{\rm dim}\goth{S}_\Delta$  are dimensions of left ${}_\Delta\goth{S}$ and right  $\goth{S}_\Delta$ vector spaces over the body  $\Delta$.

Comparing  (\ref{eq.th4.15}) and (\ref{eq.th4.18}) we find that 
\begin{equation}
\label{eq.th4.19}
\mathop{\rm dim}\nolimits_\Delta\goth{S}
=
\mathop{\rm dim}\goth{S}_\Delta
=1.
\end{equation}

Consequently 
\begin{equation}
\label{eq.th4.19+1}
{}_{\Delta}\goth{S}_{\Delta}\cong
{}_{\Delta}\Delta_{\Delta}
\end{equation}
is a quasi-Frobenius bimodule. Besides that from 
(\ref{eq.th4.19+1}) follows an existence of isomorphism 
\begin{equation}
\label{eq.th4.19+2}
{}_{\Delta_{r,r}}{\goth{S}_{r,s}}_{\Delta_{s,s}}
\cong
{}_{\Delta_{r,r}}{\Delta_{r,s}}_{\Delta_{s,s}},
\end{equation}
which implies an existence of isomorphism 
\begin{equation}
\label{eq.th4.19+3}
{}_{\omega(\hat A)}{\goth{S}({}_CM)}_{\omega(\check B)^{op}}
\cong
{}_{\Delta_{r,r}}{\Delta_{r,s}}_{\Delta_{s,s}}.
\end{equation}
Since  $\omega(\hat A)\cong \bar A$, $\omega(\check B)^{op}\cong ({\bar B}^{op})^{op}={\bar B}$ then from existence of isomorphism (\ref{eq.th4.19+3}) follows the existence of isomorphism 
\begin{equation}
\label{eq.th4.19+4}
{}_{\bar A}\goth{S}({}_CM)_{\bar B}
\cong
{}_{\Delta_{r,r}}{\Delta_{r,s}}_{\Delta_{s,s}}.
\end{equation}
Statement 
 \ref{st3}
shows that bimodule  
$
{}_{\Delta_{r,r}}{\Delta_{r,s}}_{\Delta_{s,s}}$  is a quasi-Frobenius. Consequently because of 
 (\ref{eq.th4.19+4}) bimodule  ${}_{\bar A}\goth{S}({}_C M)_{\bar B}$ is also quasi-Frobenius. 
                                                                            
So point  I of this Theorem is proven completely.

II.\ Now we can prove point  II of this Theorem. 
Let $\goth{S}({}_AM)=\goth{S}(M_B)=\goth{S}({}_CM)$.
As we already have established bimodule 
 ${}_{\bar A}\goth{S}({}_CM)_{\bar B}$
is quasi-Frobenius. Hence because of the Statement  \ref{st1} bimodule  ${}_AM_B$ is also quasi-Frobenius. 

Conversely if bimodule  ${}_AM_B$ is quasi-Frobenius then according to 
\cite[Теорема 1]{Azumaya} $\goth{S}({}_AM)=\goth{S}(M_B)$.
From other side 
$$
\goth{S}({}_AM)=\goth{S}({}_{\hat A}M)=r_M(J(\hat A)),
$$
$$
\goth{S}(M_B)=\goth{S}({}_{\check B}M)=r_M(J(\check B)). 
$$
Since  $J(\hat A), J(\check B)\lhd J(C)$ then 
$$
\goth{S}(M)=\goth{S}({}_AM)=\goth{S}(M_B)>\goth{S}({}_CM).
$$
Consequently quasi-Frobenius bimodule  ${}_{\bar A}\goth{S}({}_CM)_{\bar B}$ is subbimodule of quasi-Frobenius bimodule  ${}_{\bar A}\goth{S}(M) {}_{\bar B}$. This inclusion with necessity have to be an equality i.e. 
$$
\goth{S}({}_AM)=\goth{S}(M_B)=\goth{S}({}_CM).
$$

III.\ Since Brauer group $\mathop{\rm\bf B}(\bar Z)$ of the field  $\bar Z$ is trivial then 
 $\Delta\cong\nabla\cong\bar Z$.
Consequently 
 ${}_\Delta\goth{S}_\Delta\cong {}_{\bar Z}{\bar Z}_{\bar Z}$.
Because of above and  (\ref{eq.th4.19+4}) an isomorphism takes place: 
\begin{equation}
\label{eq.th4.19+5}
{}_{\bar A}\goth{S}({}_CM)_{\bar B}
\cong
{{}_{\bar Z_{r,r}}} {\bar Z_{r,s}}{}_{\bar Z_{s,s}}.
\end{equation}
Besides that from  (\ref{eq.th4.15}) the equality follows:  $n=r\cdot s$.

Let's show now that  $A\cong Z_{r,r}$, $B\cong Z_{s,s}$. Indeed rings 
$A$ and  $B$  contain as subrings $\phi(Z_{r,r})\cong
Z_{r,r}$ and  $\psi(Z_{s,s})\cong Z_{s,s}$ respectively. Let's denote by  $\hat\phi(Z_{s,s})$
and  $\check\psi(Z_{s,s})$ images of those subrings under converse mapping into  $C$.
Let's denote by  $\tilde C$ ring generated by  $\hat \phi(Z_{r,r})$
and  $\check\psi(Z_{s,s})$ in  $C$. According to Statement  \ref{st2} ring  $\tilde C$ is an epimorphic image of ring 
$$
\hat\phi(Z_{r,r}) \mathop{\otimes}\limits_{Z}\check\psi(Z_{s,s}) \cong Z_{r,r} 
\mathop{\otimes}\limits_Z Z_{s,s},
$$
i.e. 
\begin{equation}
\label{eq.th4.21}
\tilde C \cong
\hat\phi(Z_{r,r})\mathop{\otimes}\limits_{Z}\check\psi(Z_{s,s})/
l_{\hat\phi(Z_{r,r})\mathop{\otimes}\limits_{Z}\check\psi(Z_{s,s})}(M).
\end{equation}
Since $C\cong Z_{n,n}$ and  $n=rs$ then an isomorphism takes place: 
$$
C\cong Z_{r,r}\mathop{\otimes}_Z Z_{s,s},
$$
which implies isomorphisms: 
\begin{equation}
\label{eq.th4.21+1}
{}_CM
\cong
{}_{Z_{r,r}\mathop{\otimes}\limits_ZZ_{s,s}}M
\cong
{}_{\hat\phi(Z_{r,r})\mathop{\otimes}\limits_{Z}\check\psi(Z_{s,s})}M.
\end{equation}
Because left module  ${}_CM$ is faithful and according to (\ref{eq.th4.21+1}) left module 
$$
{}_{\hat\phi(Z_{r,r})\mathop{\otimes}\limits_{Z}\check\psi(Z_{s,s})}M
$$
is also faithful. In view of isomorphism  (\ref{eq.th4.21}) and isomorphisms 
 $\hat \phi(Z_{r,r})\cong Z_{r,r}$, $\check\psi(Z_{s,s})\cong Z_{s,s}$ we conclude that 
$\tilde C \cong C$.
Since 
$\tilde C=[\hat\phi(Z_{r,r}),\check\psi(Z_{s,s})]<[\hat A,\check B]=C$ then ring  $\tilde C$ is embedded identically into ring $C$ and previous isomorphism is only an equality. Consequently 
$\hat \phi(Z_{r,r})=\hat A$, $\check\psi(Z_{s,s})=\check B$, where from we get isomorphisms of point  III(a) of this Theorem.

Let's now trace a chain of isomorphisms: 
$$
{}_CM\cong{}_{A\mathop{\otimes}\limits_ZB^{op}}M\cong{}_AM_B.
$$
From other side
$$
{}_CM
\cong
{}_{Z_{rs,rs}}Q^{(rs)}
\cong
{}_{Z_{r,r}\mathop{\otimes}\limits_Z Z_{s,s}}Q^{(rs)}
\cong
{}_{Z_{r,r}}{Q_{r,s}}_{Z_{s,s}}.
$$
Here we have used a generalization of Proposition from  $\S10.1$ of 
\cite{Pierce}.
So we have established an isomorphism of point  III(b) of this Theorem.

Bimodule 
${}_{Z_{r,r}}{Q_{r,s}}_{Z_{s,s}}$ is quasi-Frobenius because of the Statement  \ref{st3}. Bimodule 
 ${}_AM_B$ is quasi-Frobenius because of the isomorphism of point 
III(b) of this Theorem.
\end{proof}

\section{Proof of the Theorem  \ref{TH4+1}}

For convenience let's remember that
the ring  $S$ is called \emph{ Galois-Eisenstein-Ore ring \/} (or \emph{ $GEO$-ring \/}) if it is finite completely primary ring of principal ideals i.e. the ring  $S$
contains unique (one-side) maximal ideal  $\goth{p}(S)$ and every one-side ideal of ring  $S$ is principal. 
\emph{Commutative\/} $GEO$-ring is called \emph{Galois-Eisenstein ring or $GE$-ring} \cite{NechaevMS}.
                                      
In arbitrary  $GEO$-ring  $S$ of characteristic $p^d$ contains 
\emph{ Galois subring \/} $R=GR(p^d,r)$ \cite{McDonald, Radghavendran}, $q=p^r$.
All such rings are conjugated in  $S$. Ring  $R$ is called \emph{coefficients ring \/} of  $S$.

And also let's repeat conditions of Theorem \ref{TH4+1}:

\begin{Theorem}
\label{TH4+3}

(a) For arbitrary  $GEO$-ring  $S$ bimodule ${}_SS_S$  is a quasi-Frobenius. 
                   
(b)
Let  $S$ be a  $GEO$-ring with a coefficients ring  $R=GR(p^d,r)$ and in addition  $d>1$ and
$(r,p)=1$.
Then  bimodule of translations 
 ${}_CS_Z$ of quasi-Frobenius 
$S$-bimodule 
$S$ is a quasi-Frobenius bimodule if and only if 
$S$ is a  $GE$-ring. 

\end{Theorem}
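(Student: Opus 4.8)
The plan is to treat the two parts separately: part~(a) by a direct argument using that a $GEO$-ring is a chain ring, and part~(b) by playing both implications against the already-proved Theorem~\ref{TH4}.

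\textbf{Part (a).} A $GEO$-ring $S$ is a finite completely primary principal ideal ring, hence a two-sided chain ring: the radical $\goth{p}=\goth{p}(S)=J(S)$ is the unique maximal one-sided ideal, it is nilpotent with $\goth{p}^{m}=\theta\neq\goth{p}^{m-1}$ for some $m$, and both the left and the right ideals of $S$ are precisely $S\supset\goth{p}\supset\cdots\supset\goth{p}^{m-1}\supset\theta$, with each $\goth{p}^{i}/\goth{p}^{i+1}$ one-dimensional over the finite field $\bar S=S/\goth{p}$ on either side. Thus $S$ is primary with the single primitive idempotent $e=1$ (so $t=1$ in the notation of Theorem~\ref{th1}(4)), and $\goth{p}^{m-1}$ is at once the unique minimal left ideal and the unique minimal right ideal of $S$; as a simple module over the local ring $S$ it is isomorphic to $\bar S$ both on the left and on the right. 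Hence the left $S$-module $S\cdot 1=S$ contains a minimal submodule isomorphic to $\bar S$, and symmetrically on the right, so conditions~I and~II of Theorem~\ref{th1}(4) are met; since $S$ is left and right Artinian and ${}_SS_S$ is faithful on both sides, Theorem~\ref{th1}(4) gives that ${}_SS_S$ is quasi-Frobenius. (Alternatively one checks the annihilator correspondences of Theorem~\ref{th1}(3) directly from the chain structure.)

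\textbf{Part (b), the ``if'' direction and the setup of ``only if''.} If $S$ is a $GE$-ring it is commutative, so $\hat a=\check a$ for every $a\in S$; hence the rings of left and right translations, the common center $Z$ and the ring of translations $C$ all coincide with a copy of $S$, and ${}_CS_Z\cong{}_SS_S$ is quasi-Frobenius by part~(a). For the converse, assume ${}_CS_Z$ is quasi-Frobenius and apply Theorem~\ref{TH4} to ${}_AM_B={}_SS_S$. Its hypotheses hold: ${}_SS_S$ is faithful on both sides since $S$ has an identity; ${}_CS_Z$ is quasi-Frobenius by assumption; and $Z$ is local Artinian, because by Lemma~\ref{l1(a)} the isomorphism $\phi$ identifies $Z=\hat S\cap\check S$ with the center $\mathop{\rm\bf Z}(S)$ of the finite local ring $S$ (if $a\in\mathop{\rm\bf Z}(S)$ then $\hat a=\check a\in Z$), and the non-units of $\mathop{\rm\bf Z}(S)$ are exactly $\mathop{\rm\bf Z}(S)\cap\goth{p}(S)$, the inverse of a central unit being central, so $\mathop{\rm\bf Z}(S)$ is finite and local.

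\textbf{Part (b), the ``only if'' conclusion.} Point~I of Theorem~\ref{TH4} now gives $C\cong Z_{n,n}$, the ring $S$ primary, and $\bar S\in\goth{B}(\bar Z)$ with $\bar Z=Z/J(Z)$; but $\bar S$ is a finite division ring, i.e. a field, so being central simple over $\bar Z$ forces $\bar Z=\bar S$, a finite field, whence the Brauer group $\mathop{\rm\bf B}(\bar Z)$ is trivial. Then point~III of Theorem~\ref{TH4} applies and yields $S\cong Z_{r,r}$ (with $r=s$ since $A=B=S$, and $n=r^{2}$); reducing modulo the radical gives $\bar S\cong\bar Z_{r,r}$, which is a division ring only for $r=1$. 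Hence $S\cong Z_{1,1}=Z$ is commutative, i.e. a $GE$-ring, which completes the equivalence.

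\textbf{Main obstacle.} The substantive part is the ``only if'' direction of (b), and within it the point most in need of care is the verification that the hypotheses of Theorem~\ref{TH4} genuinely hold here --- above all that $Z$ is local Artinian and that $\bar Z$ comes out equal to $\bar S$, so that the Brauer-triviality clause~III is applicable. I note that the extra hypotheses $d>1$ and $(r,p)=1$ are not used in this reduction; their role is to ensure, via the structure theory of $GEO$-rings, that non-commutative $S$ of the prescribed shape actually exist, so that part~(b) does produce the promised counterexamples to the converse of point~III of Theorem~\ref{TH4} (and to the original Hypothesis). If one prefers to bypass Theorem~\ref{TH4}, the alternative is to use the explicit description of such $S$ as a skew extension of the Galois ring $R=GR(p^{d},r)$ with an Eisenstein relation, to compute $C$, $Z$ and $\goth{S}({}_CS)$ outright, and to check that the annihilator correspondences of Theorem~\ref{th1}(3) for ${}_CS_Z$ hold precisely when the defining automorphism is the identity, i.e. when $S$ is commutative.
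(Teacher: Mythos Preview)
Your argument for part~(a) and the ``if'' direction of part~(b) matches the paper's. For the ``only if'' direction of~(b), however, you take a genuinely different and shorter route. The paper does \emph{not} invoke part~III of Theorem~\ref{TH4}; instead it uses the explicit presentation $S\cong R[x,\sigma]/I$ (available because $d>1$) to compute the ring of translations $\mathcal{R}$ of the $R$-bimodule $S$ and then $\bar C=C/J(C)$ by hand: a circulant-matrix argument (which needs $(t,p)=1$, deduced from $(r,p)=1$) produces orthogonal idempotents $\epsilon_0,\ldots,\epsilon_{t-1}\in\mathcal{R}$ showing that $\bar C$ is a direct sum of $t=\mathop{\rm ord}\sigma$ copies of the field $\bar R$. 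Combining this with $C\cong Z_{t,t}$ from Theorem~\ref{TH4}~I(a), the commutativity of $\bar C$ forces $t=1$, i.e.\ $\sigma=\mathrm{id}$ and $S$ commutative. Your reduction via part~III bypasses this computation entirely: triviality of $\mathop{\rm\bf B}(\bar Z)$ is immediate since $\bar Z$ is finite, and then $S\cong Z_{r,r}$ together with $\bar S$ being a field gives $r=1$ at once. As you note, your argument does not use the hypotheses $d>1$ and $(r,p)=1$; in the paper they are there precisely to enable the structure theorem for $S$ and the invertibility of the circulant. What the paper's approach buys in exchange is a concrete description of $\bar C$ and of the idempotents $\epsilon_l$, which is of independent interest for the skew-recurrence applications alluded to in the introduction.
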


Let's denote by  $n(S)$ the nilpotency index of ideal  $\goth{p}(S)$.
The field  $S/\goth{p}(S)$ we denote by  $\bar S$ and an image of  $s\in S$  in 
$\bar S$  by $\bar s$.
Let  $\bar S=GF(q)$.

\emph{ For every finite completely primary ring   $S$
following statements are equivalent  \cite[Theorem  1.1, points  I, II, VI, VII,
VIII respectively ]{NechaevMS}:}

\emph{(a) $S$ is a $GEO$-ring;}

\emph{(b) $|S|=q^{n(S)}$;}

\emph{(c) Every one-side ideal of  $S$ is a degree of  $\goth{p}(S)$;}
     
\emph{(d) For every $\pi_t\in\goth{p}(S)^t\setminus\goth{p}(S)^{t+1}$,
$t=\overline{0,n(S)-1}$, this equalities take place: 
$\goth{p}(S)^t=\pi_tS=S\pi_t$ (if   $t=0$ then we denote $\goth{p}(S)^0=S$).}

\emph{(e) If  $\gamma: \bar S\to S$ is a mapping with property }
\begin{equation}
\label{eqms1}
\overline{\gamma(\bar s)}=\bar s\ \ \ \mbox{for all }\ \ \ s\in S,
\end{equation}
\emph{and  $\pi_t\in\goth{p}(S)^t\setminus\goth{p}(S)^{t+1}$,  
$t=\overline{0,n(S)-1}$, then an arbitrary element  $s\in S$ is uniquely represented in the form: 
 $s=\sum\limits_{t=0}^{n(S)-1}s_t\pi_t$, where   $s_t
\in \gamma(\bar S)$, $t=\overline{0,n(S)-1}$, and also in the form: 
$s=\sum\limits_{t=0}^{n(S)-1}\pi_ts_t'$, where  $s_t'
\in \gamma(\bar S)$, $t=\overline{0,n(S)-1}$. \/}

Let's view $S$ as left unitary $R$-module. System of elements 
 $s_1,\ldots,s_t\in {}_RS$ is called 
\cite[$\S2$]{NechaevMS} {\em free \/} if there no exists a non-trivial linear relation between these elements over $R$ and is called irreducible if no one of these elements is a linear combination over $R$ of others. 
Irreducible generating system of ${}_RS$ is called \emph{ basis \/} of ${}_RS$.
By \emph{dimension  \/} $\mathop{\rm dim}{}_RS$ is called a number of elements in basis of  ${}_RS$ which is equal to dimension of vector space $S/\goth{p}(R)S$ over field  $\bar R$.

Let $T({}_RS)=\{s\in S\ |\ \goth{p}(R)^{n(R)-1}s=0\}$ be a set of elements of ${}_RS$ annihilated by some non-zero element from  $R$. Then \cite[Proposition 2.1]{NechaevMS} the number of elements in maximal free subsystem of  ${}_RS$ satisfies to equality: 
 $\mathop{\rm rang}{}_RS=\mathop{\rm dim}_{\bar R}(S/T({}_RS))$.

It is known  \cite[Corollary  2.3]{NechaevMS} that the equality takes place: 
$\mathop{\rm dim}{}_RS=
\mathop{\rm dim}S_R$,
$\mathop{\rm rang}{}_RS=
\mathop{\rm rang}S_R$,
and besides that an arbitrary basis of  ${}_RS$ is also a basis of  $S_R$ and conversely.
Because of that we can speak simply about rank, dimension and basis of  $S$ over  $R$.

For some natural $e=e(S|R)$  \cite[Theorem  1.1]{NechaevMS} these equalities take place: 
$$
\goth{p}(R)\cdot S=S\cdot\goth{p}(R)\cdot S=S\cdot\goth{p}(R)
$$ and 
$$
\goth{p}(R)\cdot S=\goth{p}(S)^e.
$$ 
Besides that $n(S)=(n(R)-1)e+\rho$ where  $\rho=\rho(S|R)$ satisfies to inequalities:  $1\le\rho\le e$.
Herewith   \cite[Proposition  5.1]{NechaevMS} \emph{ $S$ is a  $R$-bimodule of the rank 
$\rho=\rho(S|R)$ and dimension  $e=e(S|R)$  whereas $T(S)=\goth{p}(S)^\rho$.\
\/}

Let  $\sigma\in\mathop{\rm Aut}(R)$. \emph{ By the ring of Ore polynomials \/}
$R[x,\sigma]$ is called ring of polynomials   with usual addition and multiplication established by the rule:
$x\cdot r=r^\sigma\cdot x, r\in R$. If  $t=t(\sigma)$ is an order of  $\sigma$,
then center  $\mathop{\rm \bf Z}(R[x,\sigma])$ of ring  $R[x,\sigma]$ contains of and only of polynomials of the form  $c_0+c_t x^t+\cdots+c_{tm}x^{tm}$ where  $c_0,\ldots,c_{tm}
\in R_\sigma=\{ r\in R\ |\ r^\sigma=r \}$.

Polynomial  $x^l 
+c_{l -1}x^{l -1}+\cdots+c_0\in R[x,\sigma]$ is  called \emph{
Eisenstein polynomial \/} if  $c_i\in \goth{p}(R)$ for 
$i=\overline{0,l -1}$ and  $c_0\not\in\goth{p}(R)^2$ if  $n(R)>1$.
Eisenstein polynomial of the form 
\begin{equation}
\label{eq.ms.5.1}
c(x)=x^{tm}+c_{t(m-1)}x^{t(m-1)}+\cdots+c_0
\end{equation}
is called \emph{ special \/} if either 
\begin{equation}
\label{eq.ms.5.2}
c(x)\in\mathop{\rm \bf Z}(R[x,\sigma]),
\end{equation}
either for some  $a<m$
\begin{equation}
\label{eq.ms.5.3}
c(x)-c_{ta}x^{ta}\in\mathop{\rm \bf Z}(R[x,\sigma]),\ \
c_{ta}^\sigma-c_{ta}\in\goth{p}(R)^{n(R)-1}\setminus\{0\}.
\end{equation}

It is known  \cite[Theorem  5.2 II]{NechaevMS} that \emph{ if  $R$ is a ring of coefficients of $GEO$-ring  $S$, $n(R)>1$, $e=e(S|R)$, $\rho=\rho(S|R)$,  then there exist an automorphism $\sigma\in\mathop{\rm Aut}(R)$ such that $t=t(\sigma)$ divides 
 $e$, $e=t\cdot m$, and special Eisenstein polynomial of the form 
(\ref{eq.ms.5.1}) with the property: 
\begin{equation}
\label{eq.ms.5.5}
S\cong R[x,\sigma]/I, 
\end{equation}
where  $I=c(x)R[x,\sigma]+x^\rho\goth{p}(R)^{n(R)-1}[x,\sigma]$. Herewith
(\ref{eq.ms.5.3}) is fulfilled  only if $\rho=ta+1$.\/}

\emph{ Automorphism  $\sigma$  in
(\ref{eq.ms.5.5}) is uniquely determined by the ring  $S$ and do  not depends on the choice of coefficient ring $R$\/}
\cite[Proposition  5.5]{NechaevMS}.

Let's remember that  $GEO$-ring  $S$ is a \emph{ Galois--Eisenstein ring 
($GE$-ring )\/} iff 
$t=t(\sigma)=1$ i.e. Galois--Eisenstein ring is nothing but commutative Galois--Eisenstein--Ore ring  \cite{NechaevMZ}.

Besides that  \cite[Proposition 5.7]{NechaevMS} \emph{if  $\pi$ is a 
$\sigma$-element from  $\goth{p}(S)\setminus\goth{p}(S)^2$ ( i.e. for every  $r\in R$ this equality satisfies: $\pi r=r^\sigma\pi$) then: }

\emph{ I) The centralizer $C(R)$ of ring  $R$ in  $S$ is a   $GE$-ring  (\cite{NechaevMZ})
\begin{equation}
\label{eq.ms.5.9}
C(R)=R[\pi^t]=R+R\pi^t+\cdots+R\pi^{t(m-1)},
\end{equation}
whereas $e(C(R)|R)=m$, $\rho(C(R)|R)=\rho'$,  $\rho'=\left[\frac{\rho}{t}\right]$, and 
\begin{equation}
\label{eq.ms.5.10}
C(R)\cong R[y]/z(y)R[y]+y^{\rho'}\goth{p}(R)^{n(R)-1}[y],
\end{equation}
whereas  $c(x)=z(x^t)$.}

\emph{ II) The center  $\mathop{\rm \bf Z}(S)$ is equal to 
\begin{equation}
\label{eq.ms.5.11}
C(R)_\sigma=R_\sigma+R_\sigma\pi^t+\cdots+R_\sigma\pi^{t(m-1)},
\end{equation}
whereas  $\rho\not\equiv1(\mathop{\rm mod} t)$ and  is equal to 
$$
C(R)_\sigma+\goth{p}(S)^{n(S)-1}
$$  in other case.  \/}
                   
It is known \cite[$\S13$]{Kasch} that arbitrary left-side and right-side together Artinian ring  $T$ 
with identity is  \emph{ quasi-Frobenius \/} if and only if $T$-bimodule 
$T$ is a quasi-Frobenius.

$\mathop{\rm\bf Proof\  of\  the\  Theorem\  \ref{TH4+1}}$.
   
(a)
Let's use the Statement  \ref{st1}. These equalities take place: 
\begin{equation}
\label{eq.ms.1+1}
\begin{array}{c}
\goth{S}({}_SS)=r_S(\goth{p}(S))=\goth{p}(S)^{n(S)-1}=
(\pi S)^{(n(R)-1)e+\rho-1}=
\\
=\pi^{\rho-1}\cdot (\pi S)^{(n(R)-1)e}
=\pi^{\rho-1}\cdot\goth{p}(R)^{n(R)-1},
\end{array}
\end{equation}
whereas   $\pi\in\goth{p}(S)\setminus\goth{p}(S)^2$.
Analogously
\begin{equation}
\label{eq.ms.1+2}
\goth{S}(S_S)=l_S(\goth{p}(S))
=\goth{p}(S)^{(n(R)-1)e+\rho-1}=
\pi^{\rho-1}
\cdot
\goth{p}(R)^{n(R)-1}.
\end{equation}
This way  $\goth{S}({}_SS)=\goth{S}(S_S)=\goth{S}(S)$.
Besides that $\mathop{\rm dim}\goth{S}(S)_{\bar S}=\mathop{\rm dim}_{\bar S}\goth{S}(S)=1$.
Hence according to point (4) of Theorem  \ref{th1} bimodule 
${}_{\bar S}\goth{S}(S)_{\bar S}$ is quasi-Frobenius. 
Thus bimodule  ${}_SS_S$ and ring  $S$ is also quasi-Frobenius.

(b)
Let's note that  $n(R)=d$. Because  $d>1$ the ring  $S$ has a form: 
\begin{equation}
\label{eq.ms.5.5+1}
S\cong R[x,\sigma]/I,
\end{equation}
where 
$\rho=\rho(S|R)$,
$e=e(S|R)$, $t=t(\sigma)$,
$\sigma\in\mathop{\rm Aut}(R)$,
$I=c(x)R[x,\sigma]+x^\rho\goth{p}(R)^{n(R)-1}[x,\sigma]$,
$c(x)=x^{tm}+c_{t(m-1)}x^{t(m-1)}+\cdots+c_0
$
is a special Eisenstein polynomial, whereas condition 
$$
c(x)-c_{ta}x^{ta}\in\mathop{\rm \bf Z}(R[x,\sigma]),\ \
c_{ta}^\sigma-c_{ta}\in\goth{p}(R)^{n(R)-1}\setminus\{0\} 
$$
is satisfied only if 
 $\rho=ta+1$. 

Besides that by $r=\log_pq$ we have  $t|r$. Since  under condition  $(r,p)=1$ we have then  $(t,p)=1$.

Let 
$\cal R$ be a ring of translations of  $R$-bimodule $S$,
$C$ be a ring of translations of  $S$-bimodule  $S$.

Let's prove that ring  $\cal R$ is isomorphic to foreign direct sum of  
$\min\{t,\rho\}$ 
copies of ring   $R=GR(p^d,r)$ and 
$\max\{0,t-\rho\}$ 
copies of ring  $\tilde R
=R/\goth{p}(R)^{n(R)-1}=GR(p^{d-1},r)$. 

Let's denote by  $\epsilon_l\in{\cal R}$, $l=\overline{0,t-1}$, 
endomorphisms of  $R$-bimodule 
 $S$ of the form: 
\begin{equation}
\label{eq.th4+1.0-1+2}
\epsilon_l: S\to 
S^{(l)}=\sum_{j=0}^{e-1}\delta_{l,j\pmod{t}}R\pi^j,
\end{equation}
operating according to the rule:
\begin{equation}
\label{eq.2005.1}
\epsilon_l\left(\sum_{j=0}^{e-1} r_j\pi^j\right)=
\sum_{j=\overline{0,e-1} \atop j\equiv l\pmod{t}}
r_j\pi^j,
\end{equation}
where  $\delta_{\lambda,\mu}$ is a Kronecker symbol.

Since equalities take place:  $S=\sum_{j=0}^{e-1}
R\pi^j$ and  $c(\pi)=0$
where 
$\pi$ is a  $\sigma$-element from  $\goth{p}(S)
\setminus\goth{p}(S)^2$ we have: 
\begin{equation}
\label{eq.th4+1.0+5}
\hat S=\sum_{j=0}^{e-1}\hat R\hat\pi^j,
\ \
\check S=\sum_{j=0}^{e-1}\check R\check\pi^j.
\end{equation}
Hence 
\begin{equation}
\label{eq.th4+1.0+7}
C=[\hat S,\check S]=\left[\sum_{i=0}^{e-1}\hat R\hat\pi^i,
\sum_{j=0}^{e-1}\check R\check\pi^j\right]=\sum_{i,j=\overline{0,e-1}}
\left(\sum_{r_i,r_j\in R}\hat r_i\check r_j\right)\hat\pi^i\check\pi^j.
\end{equation}
It is evident that ring of translations  $\cal R$ of  $R$-bimodule 
$S$ may be represented in the form: 
${\cal R}=\sum\limits_{r_1,r_2\in R}\hat r_1\check r_2$.
Let's operate the structure of this set. 
To do this let's describe previously the ring  $\check R$.

Let  $r\in R$ and $s=\sum_{l=\overline{0,e-1}}r_l\pi^l\in S$. Then 
\begin{equation}
\label{eq.th4+1.0+8}
\begin{array}{c}
\check r(s)=s\cdot r=\sum_{l=\overline{0,e-1}}r^{\sigma^l}r_l\pi_l=
\sum_{l=\overline{0,e-1}}\widehat{(r^{\sigma^l})}(r_l\pi^l)=\\
=\sum_{l=\overline{0,t-1}}\widehat{(r^{\sigma^l})}\left(r_l\pi^l+r_{l+t}\pi^{l
+t}+\cdots+r_{l+t(m-1)}\pi^{l+t(m-1)}\right),
\end{array}
\end{equation}

since  $r^{\sigma^l}=r^{\sigma^j}$ if and only if 
$l\equiv j(\mathop{\rm mod} t)$.

Define mapping  $\phi_l: R\to\mathop{\rm End}(S)$, $l=\overline{
0,t-1}$, by the rule: for every  $ r \in  R$ put 
\begin{equation}
\label{eq.th4+1.0+9}
\phi_l(
r)\left(\sum_{j=\overline{0,e-1}}r_j
\pi^j\right)=
\sum_{j=\overline{0,t-1}}
\delta_{l,j(\mathop{\rm mod}t)}\cdot
\widehat{(r^{\sigma^l})}\left(r_j 
\pi^j\right),
\end{equation}
i.e. 
\begin{equation}
\label{eq.th4+1.0+9+1}
\phi_l(r)(S)=\widehat{(r^{\sigma^l})}(S^{(l)}).
\end{equation}

Thus 
\begin{equation}
\label{eq.th4+1.0+10}
\check r(s)=\sum_{l=\overline{0,t-1}}\phi_l(r)(s).
\end{equation}

Let's show that the family of set  $\phi_l(R)$, $l=\overline{0,t-1}$ are pair-wise orthogonal subrings of the ring  $\mathop{\rm End}(S)$.
To do this, it suffices to prove that in fact there are elements
$\epsilon_l\in{\cal R}$, $l=\overline{0,t-1}$, with pointed in equality 
(\ref{eq.th4+1.0-1+2}) property which it is convenient to represent in the form 
\begin{equation}
\label{eq.th4+1.0+10+1}
\epsilon_l(S^{(j)})=\delta_{l,j}S^{(l)}, \ \ l,j=\overline{0,t-1}.
\end{equation}

It is evident that an arbitrary mapping  $\beta\in {\cal R}$ may be represented in the following form: 
\begin{equation}
\label{eq.th4+1.0+10+2}
\beta(S)=\sum_{l=0}^{t-1}\hat\beta_l(S^{(l)}).
\end{equation}
We pose the mapping $\beta$ into compliance  a vector 
$\vec \beta=(\beta_0,\ldots,\beta_{t-1})$, $\beta_l\in R, l=\overline{0,t-1}
$, and determine the action of this  vector on the set
$S$ in this way: 
\begin{equation}
\label{eq.th4+1.0+10+3}
\vec\beta(S)=(\beta_0,\ldots,\beta_{t-1})\cdot
\left(
\begin{array}{c}
S^{(0)} \\
\cdots \\
S^{(t-1)} \\
\end{array}
\right)=
\sum_{l=0}^{t-1} \beta_l(S^{(l)})=\beta(S).
\end{equation}
Let  $\beta,\gamma\in{\cal R}$, $r\in R$. Then mapping  $\zeta=\hat
r\cdot \beta\in {\cal R}$ is represented by vector  $\vec\zeta=r\cdot \vec\beta$, and mapping 
 $\xi=\beta+\hat r\cdot \gamma\in {\cal R}$ is represented by vector   $\vec\xi=\vec\beta+r\cdot\vec\gamma$. Converse is also true. 

Let's remember that for  $r\in R$
\begin{equation}
\label{eq.th4+1.0+10+4}
\vec{\check r}=(r,r^\sigma,\ldots,r^{\sigma^{t-1}}).
\end{equation}
Let's denote this vector as  $\vec\beta(r)$ and consider the matrix 
\begin{equation}
\label{eq.th4+1.0+10+5}
C(r)=\left(
\begin{array}{c}
\vec\beta(r)\\
\vec\beta(r^\sigma)\\
\cdots\\
\vec\beta(r^{\sigma^{t-1}})\\
\end{array}
\right)=
\left(
\begin{array}{cccc}
r,&r^\sigma,&\ldots,&r^{\sigma^{t-1}}\\
r^\sigma,&r^{\sigma^2},&\ldots,&r \\
\cdots&\cdots&\cdots&\cdots\\
r^{\sigma^{t-1}},& r,&\ldots,& r^{\sigma^{t-2}}\\
\end{array}
\right).
\end{equation}
                            
Since $(t,p)=1$ over field  $\bar R=GF(q)$ there are exactly $t$ different roots from unit of degree 
$t$ namely:   $\aleph_1,\ldots,\aleph_t$. Let  $f_r(x)=
\bar r+\overline{r^\sigma}\cdot x+\cdots+\overline{r^{\sigma^{t-1}}}\cdot
x^{t-1}\in\bar R[x]$. Then there is an equality: 
$$
\mathop{\rm det}\overline{C(r)}=(-1)^{\frac{(t-1)(t-2)}{2}}\cdot\prod_{j=1}^t
f_r(\aleph_j).
$$
When $r\in R^*\setminus R_\sigma$ the right-hand side of previous equality differs from zero. Since with 
 $r\in R^*\setminus R_\sigma$ and  $(t,p)=1$ the matrix  $C(r)$ is invertible. 
Hence by equivalent transformations of rows matrix  $C(r)$ is possible to lead to the identity matrix
 $E_{t\times t}$.
It remains to remark that for every  $l\in\overline{0,t-1}$ row  $\vec
E_{l+1}$ is a vector of desired mapping 
$\epsilon_l\in{\cal R}$.

It is easy to see that there are equalities: 
\begin{equation}
\label{eq.th4+1.0+10+6}
\phi_l(r)=\check r\cdot\epsilon_l=\widehat{r^{\sigma^l}}\cdot\epsilon_l,\ \
l=\overline{0,t-1},
\end{equation}
from which it follows that $\phi_l(R)$, $l=\overline{0,t-1}$, are pair-wise orthogonal subrings of ring  $\cal R$. 

Besides that now  the equality is evident:
\begin{equation}
\label{eq.th4+1.0+10+7}
{\cal R}=\mathop{\sum\oplus}_{l=0}^{t-1} \phi_l(R).
\end{equation}

Let's describe now rings  $\phi_l(R)$, $l=\overline{0,t-1}$. To do this let's note that for arbitrary 
$r\in R$ $\phi_l(r)=\check 0$
if and only if  $r^{\sigma^l}\cdot\pi^l=0$, i.e. 
$r^{\sigma^l}\pi^l\in\goth{p}(S)^{n(S)}$, whereas   $n(S)=e(n(R)-1)+\rho$,
$0\le\rho\le e-1$. Let  $r\in \goth{p}(R)^\kappa$.
Then $r^{\sigma^l}\pi^l\in\goth{p}(S)^{e\kappa+l}$ and equality 
$r^{\sigma^l}\pi^l=0$ is achieved if and only if 
$e\kappa+l\ge e(n(R)-1)+\rho$, i.e.  $l\in\overline{\rho,e-1}$,
$\kappa=n(R)-1$. Let's note now  that  $t\le e$.
Hence these isomorphisms take place: 
\begin{equation}
\label{eq.th4+1.0+11}
\phi_l(R)\cong R,\ l=\overline{0,\min\{t,\rho\}-1},
\end{equation}
\begin{equation}
\label{eq.th4+1.0+12}
\phi_l(R)\cong R/\goth{p}(R)^{n(R)-1}=GR(q^{d-1},p^{d-1}),\
l=\overline{\rho,t-1}.
\end{equation}

Let's describe now the ring  $\bar C=C/J(C)$.
Let's note that  $C={\cal R}[\hat\pi,\check\pi]$.
Since  $\hat\pi$ and  $\check\pi$ are nil-potent elements of ring $C$ then 
 $\hat\pi,\check\pi\in J(C)$.
Consequently 
 $\bar C=C/J(C)\cong\mathop{\sum\oplus}\limits_{l=\overline{0,t-1}}
\overline{\phi_l(R)}$ is isomorphic to foreign direct sum of  $t$ copies of the field 
 $\bar R=GF(q)$.

Immediately from the definitions it follows that for arbitrary  $GEO$-ring $S$
common center  $Z$ of rings  $\hat S$ and  $\check S$ is equal to  $\mathop{\rm\bf Z}(S)$.

Let's suppose now that bimodule 
 ${}_CS_Z$ is quasi-Frobenius. Then since  $Z$ is local Artinian ring we can apply results of Theorem 
\ref{TH4}. In particular according to point  I(a) of Theorem \ref{TH4} the ring  $C$ is isomorphic to 
 $Z_{n,n}$ whereas 
$n=\mathop{\rm dim}\nolimits_{\bar Z} \goth{S}(S)$.
Let's remark that 
$\bar Z=\overline{R_\sigma}$ and 
$\mathop{\rm dim}_{\bar Z}\goth{S}(S)=t=\mathop{\rm ord}\sigma$.

Thus ring  $\bar C=C/J(C)\cong\mathop{\sum\oplus}\limits_{l=\overline{0,t-1}}
\overline{\phi_l(R)}$ is isomorphic to foreign direct sum of $t$ copies of the field 
 $\bar R=GF(q)$ and has to be isomorphic to ring of  $t\times t$-matrices over ring  ${R_\sigma}$.

But the ring  $\bar C$ is isomorphic to matrix ring only if 
 $t=1$ i.e. if 
$S$ is a $GE$-ring.

Converse implication is evident because firstly an arbitrary
$GE$-ring is quasi-Frobenius and secondly for arbitrary commutative ring  $S$
ring of translations of  $S$-bimodule  $S$ and common center of  $\hat S$ and  $\check S$ relatively to 
 $S$ coincide with  $S$.
{\hfill $\Box$}

\end{document}